\theoremstyle{plain}
\newtheorem{theorem}{Theorem}[section]
\newtheorem{lemma}[theorem]{Lemma}
\theoremstyle{definition}
\newtheorem{remark}[theorem]{Remark}
\numberwithin{equation}{section}
\def\be{\begin{equation}}
\def\ee{\end{equation}}
\begin{document}

\title[Liouville's Equation in Singular Domains]
{Boundary Behaviors for Liouville's Equation\\ in Planar Singular Domains}
\author{Qing Han}
\address{Department of Mathematics\\
University of Notre Dame\\
Notre Dame, IN 46556, USA} \email{qhan@nd.edu}
\author{Weiming Shen}
\address{Beijing International Center for Mathematical Research\\
Peking University\\
Beijing, 100871, China}  \email{wmshen@pku.edu.cn}

\begin{abstract}
We study asymptotic behaviors near the boundary of complete metrics of constant curvature
in planar singular domains and establish an optimal estimate of these metrics by the
corresponding metrics in tangent cones near isolated singular points on boundary.
The conformal structure plays an essential role.
\end{abstract}

\thanks{The first author acknowledges the support of NSF
Grant DMS-1404596. The second author acknowledges the support of NSFC
Grant 11571019.}
\maketitle

\section{Introduction}\label{sec-Intro}

Assume $\Omega\subset \mathbb{R}^{2}$ is a domain. We consider the following problem:
\begin{align}
\label{eq-MainEq} \Delta{u}& =e^{ 2u } \quad\text{in }\Omega, \\
\label{eq-MainBoundary}u&=\infty\quad\text{on }\partial \Omega.
\end{align}
The equation \eqref{eq-MainEq} is known as Liouville's equation.
For a large class of domains $\Omega$, \eqref{eq-MainEq} and \eqref{eq-MainBoundary} admit a
solution $u\in C^\infty(\Omega)$.
Geometrically,  $e^{ 2u }(dx_1\otimes dx_1+dx_2\otimes dx_2)$ is
a complete metric with constant Gauss curvature $-1$ on $\Omega$.
Our main concern in this paper is the
asymptotic behavior of solutions $u$ near isolated {\it singular} points on boundary.

The higher dimensional counterpart
is given by, for $\Omega\subset\mathbb R^n$, $n\ge 3$,
\begin{align}
\label{eq-MainEq-HigherDim} \Delta{u} = \frac14n(n-2)u^{\frac{n+2}{n-2}}\quad\text{in }\Omega.
\end{align}
Geometrically, $u^{\frac{4}{n-2}}\sum_{i=1}^{n}dx_i\otimes dx_i$ is
a complete metric with the constant scalar curvature $-n(n-1)$ on $\Omega$.
More generally, we can study, for a function $f$,
\begin{align}
\label{eq-MainEq-HigherDim-general} \Delta{u} = f(u)\quad\text{in }\Omega.
\end{align}

The study of these problems has a rich history.
Bieberbach \cite{Bieberbach1916} studied the problem \eqref{eq-MainEq} and \eqref{eq-MainBoundary} and
Loewner and Nirenberg \cite{Loewner&Nirenberg1974} studied the problem
\eqref{eq-MainEq-HigherDim} and \eqref{eq-MainBoundary}. They proved that
there exists a solution in every bounded domain satisfying the inner and outer sphere condition.
Lazer and McKenna \cite{LazerMcKenna1993} proved the existence of solutions of
\eqref{eq-MainEq} and \eqref{eq-MainBoundary} in domains satisfying the outer sphere condition.
If $f$ is monotone, Keller \cite{Keller1957} established the existence for
\eqref{eq-MainEq-HigherDim-general} and \eqref{eq-MainBoundary}. We refer to the survey paper
\cite{Bandle&Flucher1996}
for more information on the equations  \eqref{eq-MainEq} and \eqref{eq-MainEq-HigherDim}.

In a pioneering work,
Loewner and Nirenberg \cite{Loewner&Nirenberg1974} studied asymptotic behaviors
of solutions of \eqref{eq-MainEq-HigherDim} and \eqref{eq-MainBoundary} and proved
an estimate involving leading terms.
A similar estimate can be established for solutions of \eqref{eq-MainEq} and \eqref{eq-MainBoundary}.
Kichenassamy
\cite{Kichenassamy2004JFA}, \cite{Kichenassamy2005JFA} expanded further if $\Omega$ has a $C^{2,\alpha}$-boundary.
See also Bandle and Marcus \cite{Bandle&Marcus1995} and
Diaz and Letelier \cite{DiazLetelier1992}, for example, for more general $f$.
Moreover, if $\Omega$ has a smooth boundary, an
estimate up to an arbitrarily finite order was established by
Andersson, Chru\'sciel and Friedrich \cite{ACF1982CMP} and Mazzeo \cite{Mazzeo1991}.
In fact, they proved that solutions of \eqref{eq-MainEq-HigherDim} and \eqref{eq-MainBoundary}
are polyhomogeneous.
All these results require $\partial\Omega$ to have
some degree of regularity. The case where $\partial\Omega$ is singular was
studied by del Pino and Letelier \cite{delPino2002}
and Marcus and Veron \cite{Marcus&Veron1997}.
However, no explicit estimates are known in neighborhoods of singular boundary points.

Other problems with a similar feature include complete K\"ahler-Einstein metrics discussed by
Cheng and Yau \cite{ChengYau1980CPAM},
Fefferman \cite{Fefferman1976}, and Lee and Melrose \cite{LeeMelrose1982},
the complete minimal graphs in the hyperbolic space by Han and Jiang \cite{HanJiang},
Lin \cite{Lin1989Invent} and Tonegawa \cite{Tonegawa1996MathZ} and
a class of Monge-Amp\`{e}re equations by Jian and Wang \cite{JianWang2013JDG}.

Now we return to \eqref{eq-MainEq}-\eqref{eq-MainBoundary} and study behaviors of solutions near boundary.
For bounded domains $\Omega\subset\mathbb R^2$, let $d$ be the distance function to $\partial\Omega$.
Then, $d$ near $\partial\Omega$ has the same regularity as $\partial\Omega$.
We first employ a blowup process
to make a reasonable guess of the form of leading terms.
When the domain is at least $C^1$,
blowing up at a boundary point yields a half space whose boundary is the tangent line of the original domain. The solution
over the half-space, say $\{(x_1,x_2): x_2>0\}$, is given by the one-variable function $-\log x_2$.
However, the half-space  may not match
the original domain locally at the blowup point. Therefore, we need to modify this one-variable function.
Due to the regularity of the boundary, the function $-\log d$, defined in the original domain,
is a good replacement of $-\log x_2$.
This informal discussion provides a good reasoning that
$-\log d$ should be the leading term when the domain is more than $C^1$.
Although $u$ is smooth inside the domain due to the regularity
of solutions to elliptic equations,
the leading term $-\log d$
has the same regularity as the boundary and is
not always smooth near the boundary.

In fact, under the condition that $\partial\Omega$ is $C^2$,
the solution $u$ of
\eqref{eq-MainEq}-\eqref{eq-MainBoundary} satisfies
\begin{equation}\label{eq-EstimateDegree1}|u+\log d|\le Cd,\end{equation}
where $C$ is a positive constant depending only on the geometry of $\partial\Omega$.
(See Theorem \ref{thrm-C-1,alpha-expansion}.)
The  proof of \eqref{eq-EstimateDegree1} is by the maximum principle,
specifically, by a comparison of $u$ and the corresponding solutions
in the interior tangent balls and outside the exterior tangent balls, respectively.
We also point out that $Cd$ in the right-hand side is optimal
under the assumption that the boundary is $C^2$. Refer to
\cite{Loewner&Nirenberg1974} for a similar estimate for solutions of
\eqref{eq-MainEq-HigherDim} and \eqref{eq-MainBoundary}.

In this paper, we study
asymptotic behaviors of $u$ near isolated singular points on $\partial\Omega$.
We first describe our setting and employ the blowup process as above to make a reasonable guess
of the form of the leading terms.

Taking a boundary point, say the origin, we assume $\partial\Omega$
has a conic singularity at the origin in the following sense: $\partial\Omega$ in a neighborhood
of the origin consists of two $C^2$-curves $\sigma_1$ and $\sigma_2$,
intersecting at the origin with an angle $\mu\pi$ for some constant $\mu\in (0,2)$. Here, the origin is
an end point of the both curves $\sigma_1$ and $\sigma_2$. Let $l_1$ and $l_2$ be two rays
starting from the origin and tangent to $\sigma_1$ and $\sigma_2$ there, respectively.
Then, an infinite cone $V_{\mu}$ formed by $l_1$ and $l_2$ is considered as a tangent cone
of $\Omega$ at the origin, with an opening angle $\mu\pi$. Solutions of
\eqref{eq-MainEq}-\eqref{eq-MainBoundary} in $V_{\mu}$ can be written explicitly.
In fact, using polar coordinates, we write
$$V_{\mu}=\{(r,\theta):\, r\in (0,\infty),\, \theta\in (0,\mu\pi)\}.$$
Here, $l_1$ corresponds to $\theta=0$ and $l_2$ to $\theta=\mu\pi$.
Then, the solution $v_{\mu}$ of \eqref{eq-MainEq}-\eqref{eq-MainBoundary} in $V_\mu$
is given by
\begin{equation}\label{eq-definition_v_main}v_{\mu}=-\log\left(\mu r\sin\frac\theta\mu\right).\end{equation}
The blowup process suggests that $v_{ \mu}$ should provide a good approximation of $u$ near the origin.
However, we encounter the same problem that
the tangent cone may not match
the original domain locally at the blowup point.
For a remedy, we need to modify $v_\mu$ to get a function defined in
$\Omega$ near the origin.

Let  $d, d_1$ and $ d_2$ be the distances to $\partial\Omega, \sigma_1$ and $ \sigma_2$, respectively. Then,
$d=\min\{d_1, d_2\}$ near the origin.
For $\mu\in (0,1]$, we define, for any $x\in \Omega$,
\begin{equation}\label{eq-definition_f_main1}f_{\mu}(x)=
-\log \left(\mu |x|  \sin\frac{\arcsin\frac{d(x)}{|x|}}{\mu}\right).\end{equation}
We note that $f_\mu$ in \eqref{eq-definition_f_main1} is well-defined for $x$ sufficiently small and that
$\{x\in\Omega:\, d_1(x)=d_2(x)\}$ is a curve from the origin for $\mu\in (0,1]$ near the origin.
The case $\mu\in (1,2)$ is slightly more complicated since $\{x\in\Omega:\, d_1(x)=d_2(x)\}$ has a nonempty interior
and $d(x)=|x|$ there.
We can still use \eqref{eq-definition_f_main1} to define $f_\mu(x)$ for $x\in\Omega$ with $d_1(x)\neq d_2(x)$
and we need to modify for $x\in\Omega$ with $d_1(x)=d_2(x)$. We will provide such a modification in Section \ref{sec-IsolatedSingular},
specifically by \eqref{eq-definition_f}.

We now state our main result in this paper.

\begin{theorem}\label{thrm-Main}
Let $\Omega$ be a bounded domain in $\mathbb R^2$
and $\partial\Omega\cap B_{r_0}$ consist of two $C^{2}$-curves
$\sigma_1$ and $\sigma_2$ intersecting at the origin at an angle $\mu\pi$, for some
constant $\mu\in (0,2)$ and some $r_0>0$.
Suppose $ u \in C^{2}(\Omega)$ is  a solution of
\eqref{eq-MainEq}-\eqref{eq-MainBoundary}.
Then, for any $x\in\Omega\cap B_\delta$,
\begin{equation}\label{eq-MainEstimate}\left|u(x)
-f_\mu(x)\right|\le Cd(x),\end{equation}
where $f_\mu$ is the function defined in \eqref{eq-definition_f_main1} for
$\mu\in (0,1]$ and in \eqref{eq-definition_f} for $\mu\in (1,2)$, $d$ is the distance to $\partial\Omega$,
and $\delta$ and $C$ are positive constants depending only on $\mu$, $r_0$ and the
$C^2$-norms of $\sigma_1$ and $\sigma_2$.
\end{theorem}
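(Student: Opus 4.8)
The plan is to prove \eqref{eq-MainEstimate} by the comparison principle, using $f_\mu$ — or rather a slightly corrected version of $f_\mu$ — as a barrier. The central difficulty is that $f_\mu$ is the \emph{exact} solution only in the model cone $V_\mu$; in $\Omega$ it fails to solve \eqref{eq-MainEq} exactly, and worse, it is only Lipschitz (not $C^2$) across the curve $\{d_1=d_2\}$, so we cannot feed it directly into the maximum principle. I would therefore proceed in three stages: (i) reduce to a comparison estimate in a small punctured half-disc-type region $\Omega\cap B_\delta$ with the boundary pieces $\sigma_1,\sigma_2$; (ii) compute $\Delta f_\mu - e^{2f_\mu}$ and show it is controlled, and handle the non-smooth locus; (iii) build sub/super-solutions of the form $f_\mu \pm C d \pm (\text{lower order})$ and invoke comparison on the part of $\partial(\Omega\cap B_\delta)$ where both functions are finite, using $u=\infty$ on $\sigma_1\cup\sigma_2$ and the known estimate \eqref{eq-EstimateDegree1} (valid away from the corner, where the boundary is $C^2$) on the arc $\Omega\cap\partial B_\delta$.

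For step (ii), the key computation is to express $f_\mu$ in terms of the $C^2$ functions $|x|$ and $d_i(x)$. Since $\sigma_i$ is $C^2$ and passes through the origin tangent to $l_i$, I would write $d_i(x) = \langle x, n_i\rangle + O(|x|^2)$ and $\arcsin(d_i/|x|)$ as the angular coordinate plus an $O(|x|)$ correction; substituting into $-\log(\mu|x|\sin(\arcsin(d_i/|x|)/\mu))$ and using that $v_\mu$ exactly solves Liouville's equation in the cone, the leading terms cancel and one is left with $|\Delta f_\mu - e^{2f_\mu}| \le C$ (a bounded error, since $e^{2f_\mu}\asymp (\mu|x|\sin(\cdot))^{-2}\asymp d^{-2}$ and the curvature corrections from $\sigma_i$ being curved rather than straight contribute the same order). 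On the non-smooth set $\{d_1=d_2\}$ one checks that $f_\mu$ has the right one-sided inequality for a sub/super-solution in the viscosity sense — i.e. the gradient jump goes the "good" way — which for $\mu\le 1$ is the standard fact that a min of two supersolutions is a supersolution, and for $\mu\in(1,2)$ requires a short separate argument on the wedge $\{d_1=d_2\}$ where $f_\mu$ is defined via the angle $\theta$.

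For step (iii), I would take $\overline u = f_\mu + Cd + A$ and $\underline u = f_\mu - Cd - A$ for suitable constants $C,A$ depending on $\partial\Omega$ (the constant $A$ absorbs the boundary arc error and the $\delta$-scale mismatch, and can be removed afterward by a scaling/rechecking argument, or one works directly with the $Cd$ term by noting $d\le 2\delta$ is itself small). Using $\Delta d = O(1)$ near the $C^2$ pieces and $e^{2\overline u} = e^{2f_\mu}e^{2Cd+2A} \ge e^{2f_\mu}(1 + 2Cd)$, the error terms from step (ii) are dominated once $C$ is large, giving $\Delta\overline u \le e^{2\overline u}$, i.e. $\overline u$ is a supersolution; similarly $\underline u$ is a subsolution. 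On $\sigma_1\cup\sigma_2$ we have $f_\mu\to\infty$ at the same rate as $u$ (both behave like $-\log d$), so $\overline u \ge u \ge \underline u$ holds near that part of the boundary after adjusting $A$; on the arc $\Omega\cap\partial B_\delta$ we use \eqref{eq-EstimateDegree1}-type control since $f_\mu$ and $u$ are both comparable to $-\log d$ there with the corner at bounded distance. The comparison principle on $\Omega\cap B_\delta$ then yields $|u-f_\mu|\le Cd$ after tracking constants. The main obstacle I anticipate is step (ii) combined with the non-smoothness in step (iii): making the error estimate for $\Delta f_\mu - e^{2f_\mu}$ genuinely $O(1)$ (not $O(d^{-1})$) requires exploiting that $v_\mu$ solves the equation \emph{exactly}, so the $C^2$-regularity of $\sigma_i$ must be used carefully to see that replacing the straight rays $l_i$ by the curves $\sigma_i$ perturbs the relevant quantities only at the order that Liouville's equation can absorb, and the conformal/complex-analytic structure hinted at in the abstract is likely what makes this clean.
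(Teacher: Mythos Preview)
Your approach is genuinely different from the paper's, and the central computational claim does not hold as stated.

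The paper never computes $\Delta f_\mu - e^{2f_\mu}$ directly. Instead it uses the conformal map $T\colon z\mapsto z^{1/\mu}$, which sends the tangent cone $V_\mu$ to the upper half-plane and (by Lemma~\ref{lemma-solution_under_conformal_trans}) transforms solutions of Liouville's equation into solutions. Under $T$, the curved pieces $\sigma_i$ become curves $\widetilde\sigma_i$ tangent to the $x$-axis with regularity $C^{1,\mu}$ when $\mu<1$ and $C^{2,\mu-1}$ when $\mu>1$; the problem is thereby reduced to the \emph{flat} boundary expansions of Theorems~\ref{thrm-C-1,alpha-expansion} and~\ref{thrm-C-2,alpha-expansion}, which are proved by elementary interior/exterior ball comparison. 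The paper also does \emph{not} run a single barrier on all of $\Omega\cap B_\delta$: it partitions into three regimes $\Omega_1=\{d>c_0|z|\}$, $\Omega_2=\{c_1|z|^2<d<c_0|z|\}$, $\Omega_3=\{d<c_1|z|^2\}$ and uses a different mechanism in each (the conformal transfer in $\Omega_1,\Omega_2$; direct tangent-ball comparison in $\Omega_3$, where the corner is effectively far away). This decomposition is not cosmetic---it reflects exactly the scales at which the ``replace $\theta$ by $\arcsin(d/|x|)$'' substitution behaves differently.

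Your step~(ii) asserts $|\Delta f_\mu - e^{2f_\mu}|\le C$. This is already false for $\mu=1$, where $f_1=-\log d$ and $\Delta f_1 - e^{2f_1}=-\Delta d/d=\kappa/d$, an $O(1/d)$ error. For general $\mu$ one can write $f_\mu=-\log(\mu r)-\log\sin(\hat\theta/\mu)$ with $\hat\theta=\arcsin(d_1/r)$ and compute
\[
\Delta f_\mu - e^{2f_\mu}=\frac{|\nabla\hat\theta|^2-r^{-2}}{\mu^2\sin^2(\hat\theta/\mu)}-\frac{\cot(\hat\theta/\mu)}{\mu}\,\Delta\hat\theta;
\]
using the eikonal identity $|\nabla d_1|=1$ one finds $|\nabla\hat\theta|^2-r^{-2}=-2r^{-1}\tan\hat\theta\,\partial_r\hat\theta$, and after tracking the orders this gives an $O(1/d)$ error, not $O(1)$. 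Ironically, $O(1/d)$ \emph{is} absorbed by the $2Cd\,e^{2f_\mu}\asymp C/d$ gain from the barrier $f_\mu+Cd$---so your worry that ``$O(d^{-1})$ would not suffice'' is misplaced---but establishing the $O(1/d)$ bound requires exactly the kind of delicate cancellation (via $|\nabla d_1|=1$) that you have not carried out, and the non-$C^1$ matching of $f_\mu$ across the interfaces for $\mu\in(1,2)$ (where the jump in $\partial_\theta f_\mu$ is of size $O(\kappa_0 r)$ with a sign depending on the curvature of $\sigma_i$, not a priori ``the good way'') would still need to be handled. The additive constant $A$ in your barriers is also problematic: it yields only $|u-f_\mu|\le Cd+A$, and there is no evident scaling that removes $A$ afterward. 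The conformal route avoids all of these computations at once.
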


The estimate \eqref{eq-MainEstimate} generalizes \eqref{eq-EstimateDegree1}
to singular domains and is optimal. The power one of the
distance function in the right-hand side cannot be improved without better regularity
assumptions of the boundary. The proof of Theorem \ref{thrm-Main}
is based on a combination of conformal transforms and the maximum principle. An appropriate
conformal transform changes the tangent cone at the origin to the upper half plane.
The new boundary has a better regularity at the origin for $\mu\in (1,2)$ and becomes
worse for $\mu\in (0,1)$. Such a change in the regularity of the boundary requires us to
discuss asymptotic behaviors of solutions near $C^{1,\alpha}$-boundary and near
$C^{2,\alpha}$-boundary.

The paper is organized as follows. In Section \ref{sec-Existence},
we provide some preliminaries for
solutions of \eqref{eq-MainEq}-\eqref{eq-MainBoundary}.
In Section \ref{sec-C-1,alpha-boundary}, we study the asymptotic
expansions near $C^{1,\alpha}$-boundary and derive an optimal estimate.
In Section \ref{sec-C-2,alpha-boundary}, we study the asymptotic
expansions near $C^{2,\alpha}$-boundary and derive the corresponding optimal estimate.
In Section \ref{sec-IsolatedSingular}, we study asymptotic behaviors near isolated
singular points and prove Theorem \ref{thrm-Main}.

We would like to thank Matthew Gursky for suggesting the problem
to investigate asymptotic behaviors of solutions of
\eqref{eq-MainEq-HigherDim} and \eqref{eq-MainBoundary} near singular boundary points,
a project we will pursue elsewhere.

\section{Preliminaries}\label{sec-Existence}

In this section,
we collect some well-known results concerning
solutions of \eqref{eq-MainEq}-\eqref{eq-MainBoundary}.


Let $x_{0}\in \mathbb R^2$ be a point and $r>0$ be a constant.
For $\Omega=B_r(x_0)$, denote by $u_{r,x_0}$ the corresponding solution of
\eqref{eq-MainEq}-\eqref{eq-MainBoundary}. Then,
\begin{equation}\label{eq-solution-inside}u_{r,x_{0}}(x)=\log\frac{2r}{r^{2}-|x-x_{0}|^{2}}.\end{equation}
With $d(x)=r-|x-x_0|$, we have
$$u_{r,x_{0}}=-\log d-\log\left(1-\frac{d}{2r}\right).$$
For $\Omega=\mathbb R^2\setminus B_r(x_0)$, denote by $v_{r,x_0}$ the corresponding solution of
\eqref{eq-MainEq}-\eqref{eq-MainBoundary}. Then,
\begin{equation}\label{eq-solution-outside}v_{r,x_{0}}(x)=\log\frac{2r}{|x-x_{0}|^{2}-r^{2}}.\end{equation}
With $d(x)=|x-x_0|-r$, we have
$$v_{r,x_{0}}=-\log d-\log\left(1+\frac{d}{2r}\right).$$
These two solutions play an important role in this paper.

Now, we state the well-known existence and uniqueness of solutions
of \eqref{eq-MainEq}-\eqref{eq-MainBoundary}.
Refer to \cite{LazerMcKenna1993} for a proof.

\begin{theorem}\label{thrm-ExistenceUniqueness}
Let $ \Omega $ be a bounded domain in $\mathbb R^2$
satisfying a uniform exterior cone condition.
Then, there exists a unique solution $ u \in C^{\infty}(\Omega)$ of
\eqref{eq-MainEq}-\eqref{eq-MainBoundary}.
\end{theorem}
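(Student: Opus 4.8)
The plan is to prove existence by a variational or monotone iteration scheme combined with barriers, and uniqueness by the maximum principle. For existence, I would first solve the problem on an exhausting sequence of subdomains with smooth boundary values. Concretely, pick an increasing sequence of smooth bounded domains $\Omega_k$ with $\overline{\Omega_k}\subset\Omega_{k+1}$ and $\bigcup_k\Omega_k=\Omega$, and on each $\Omega_k$ solve $\Delta u_k=e^{2u_k}$ with boundary data $u_k=k$ on $\partial\Omega_k$. The existence of $u_k$ for the finite boundary-value problem is standard (via the method of sub- and supersolutions: $0$ is a subsolution once $k\ge 0$, and a large constant or the solution $u_{R,x_0}$ on a large ball containing $\Omega$ serves as a supersolution). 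Since the boundary data and the domains are increasing, the maximum principle gives $u_k\le u_{k+1}$ on $\Omega_k$, so $\{u_k\}$ is monotone increasing.

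Next I would establish a uniform local upper bound so that the monotone limit is finite and smooth. For any $x\in\Omega$ with $d(x)=d$ small, the ball $B_d(x)\subset\Omega\subset\Omega_k$ for $k$ large, and comparing $u_k$ with the solution $u_{d,x}$ on $B_d(x)$ via the maximum principle gives $u_k(x)\le u_{d,x}(x)=-\log d+\log 2$, uniformly in $k$. Hence $u:=\lim_k u_k$ is finite in $\Omega$. Interior elliptic estimates (the right-hand side $e^{2u_k}$ being locally uniformly bounded) then upgrade the monotone pointwise convergence to $C^\infty_{loc}$ convergence, so $u\in C^\infty(\Omega)$ solves $\Delta u=e^{2u}$. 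It remains to check the boundary condition $u=\infty$ on $\partial\Omega$: the lower bound comes from the uniform exterior cone condition exactly as in Lemma~\ref{lemma-ExteriorCone} — placing an exterior ball $B_d(\widetilde p)$ inside the complement and comparing with $v_{d,\widetilde p}$ gives $u(x)\ge -\log d-C$, which forces $u(x)\to\infty$ as $d(x)\to 0$. Combined with the upper bound $u\le -\log d+\log 2$, we in fact get $|u+\log d|\le C$ near $\partial\Omega$, which in particular yields $u=\infty$ on $\partial\Omega$ in the required sense.

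For uniqueness, suppose $u_1,u_2$ are two solutions. Fix $\varepsilon>0$ and consider $u_1$ versus $(1+\varepsilon)$-type perturbations, or more directly argue as follows: for small $\delta$, on the region $\{d<\delta\}$ both solutions satisfy $|u_i+\log d|\le C$, so $u_1-u_2$ is bounded there, and on the compact set $\{d\ge\delta\}$ it is certainly bounded; thus $w:=u_1-u_2$ is a bounded function on $\Omega$ with $w\to 0$ at $\partial\Omega$ would suffice — but a priori we only know $w$ is bounded. To close the argument I would use that $\Delta w=e^{2u_1}-e^{2u_2}=c(x)w$ with $c(x)=2\int_0^1 e^{2(tu_1+(1-t)u_2)}\,dt>0$; if $\sup_\Omega w=m>0$, then at a point where $w$ is near its supremum we would need $\Delta w\le 0$ while $c(x)w>0$, a contradiction via the maximum principle once we control behavior at the boundary and at infinity of $\Omega$. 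Since $w$ is bounded and $c>0$, the standard comparison principle for the operator $\Delta-c(x)$ on bounded domains (using that $w$ has no interior positive maximum and tends to $0$ along $\partial\Omega$ because $u_1-u_2=(-\log d-C_1)-(-\log d-C_2)$ stays bounded — more carefully, one shows $u_1-u_2\to 0$ by comparing each $u_i$ with the same barriers $u_{d,x}$ and $v_{d,\widetilde p}$ whose difference tends to $0$) gives $w\le 0$; symmetrically $w\ge 0$, hence $u_1=u_2$.

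The main obstacle is the uniqueness part, specifically showing $u_1-u_2\to 0$ at $\partial\Omega$ rather than merely staying bounded; the barriers from Lemma~\ref{lemma-ExteriorCone} only give $|u_i+\log d|\le C$, which controls the difference but not its decay. The fix is to sharpen the barriers: the interior ball barrier gives $u_i(x)\le -\log d-\log(1-\tfrac{d}{2R_i})$ and one can also produce an \emph{interior} cone-type lower barrier matching to leading order, so that both $u_i(x)+\log d\to$ a value determined only by the local geometry (indeed $\to 0$ after the conformal normalization), forcing $u_1-u_2=o(1)$. Once that is in hand, the maximum principle for $\Delta-c(x)$ with $c>0$ finishes uniqueness cleanly, and this is exactly the estimate \eqref{eq-EstimateDegree1} in sharper form that the later sections refine.
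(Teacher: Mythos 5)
Your existence argument matches the paper's Step 1 in spirit (monotone approximation with boundary data $k$, interior comparison with $u_{d,x}$ for a uniform local upper bound, elliptic estimates for smooth convergence, and the exterior-cone barrier for the boundary condition); using an exhausting sequence of smooth subdomains is a harmless variant of the paper's direct use of $\Omega$.

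The uniqueness argument, however, has a genuine gap. You set $w=u_1-u_2$ and want to apply the maximum principle to $\Delta w-c(x)w=0$, which requires $w\to 0$ at $\partial\Omega$; you acknowledge this and propose to get it by ``sharpening the barriers'' so that $u_i+\log d\to 0$ (or to a common limit). That step does not hold for domains with only a uniform exterior cone condition. Lemma~\ref{lemma-ExteriorCone} gives only $|u_i+\log d|\le C$, and this cannot be improved to $u_i+\log d\to 0$: the model solutions $v_\mu$ in the cones $V_\mu$ (equations \eqref{eq-Solution-Cone1}--\eqref{eq-Solution-Cone2}) have $v_\mu+\log d$ equal to an angle-dependent nonzero constant along rays from the vertex, so ``the same value determined by local geometry'' is not $0$ and need not be attained as a limit by two a priori different solutions. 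Likewise, your comparison barriers $u_{d,x}$ and $v_{d,\widetilde p}$ differ by a constant of order $1$, not $o(1)$, so they do not pinch $u_1-u_2$ to zero. The paper sidesteps all of this with a quotient trick: after normalizing $\Omega\subset B_{1/2}$ so that $u\ge\log 4$, it sets $w=v/u$ (ratio, not difference), observes $w\ge 1$ and $w\to 1$ at $\partial\Omega$ — which follows merely from $|u_i+\log d|\le C$ and $u_i\to\infty$, since $(-\log d+O(1))/(-\log d+O(1))\to 1$ — derives a PDE for $w$, and uses the convexity inequality $a^s-as>0$ for $s>1$, $a>e$, to rule out an interior maximum $>1$. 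That is the key idea missing from your approach; without it (or some equivalent device such as a domain-scaling comparison), the difference $u_1-u_2$ is only controlled to be bounded near $\partial\Omega$, which is not enough for the maximum principle you invoke.
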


To end this section, we prove a preliminary result for domains with singularity.
We note that a finite cone
is determined by its vertex, its axis, its height and its opening angle.

\begin{lemma}\label{lemma-ExteriorCone}
Let $ \Omega $ be a bounded domain in $\mathbb R^2$
satisfying a uniform exterior cone condition.
Suppose $ u \in C^{2}(\Omega)$ is  a solution of
\eqref{eq-MainEq}-\eqref{eq-MainBoundary}.
Then, for any $x\in \Omega$ with $d(x)<\delta$,
$$|u(x)+\log d(x)|\le C,$$
where $\delta$ and $C$ are positive constants
depending only on the uniform exterior cone.
\end{lemma}

\begin{proof}
For any $ x \in \Omega$ with $d(x)=d$, we have $ B_{d}(x) \subset \Omega$.
We assume $d=|x-p|$ for some $p\in\partial\Omega $.
Let $u_{d,x}$ be the solution of \eqref{eq-MainEq}-\eqref{eq-MainBoundary}
in $B_d(x)$, given by \eqref{eq-solution-inside}.
By the maximum
principle, we have
$$u(x) \leq u_{d,x} (x)= -\log d-\log\left(1-\frac{d}{2d}\right)
=-\log d+\log 2.$$
Next, there exists a cone $V$, with vertex $p$,  axis $\overrightarrow{e_{p}}$, height $h$
and opening angle $2\theta$, such that $V\cap \Omega=\emptyset$. Here, we can assume $h$
and $\theta$
do not depend on the choice of $p \in \partial \Omega.$
Set $\widetilde{p}= p+\frac{1}{\sin\theta}d \overrightarrow{e_{p}}.$
It is straightforward to check $B_{d}(\widetilde{p})\subset V\subset\Omega^{C} $,
if $d<\frac{h}{1+\frac{1}{\sin\theta}}$, and
dist$(x,\partial B_d(\widetilde p))\le \frac{d}{\sin\theta}$.
Let $v_{d,\widetilde p}$ be the solution of \eqref{eq-MainEq}-\eqref{eq-MainBoundary}
in $\mathbb R^2\setminus B_d(\widetilde p)$, given by \eqref{eq-solution-outside}.
Then, by the maximum principle, we have
$$u(x) \geq v_{d,\widetilde{p}}(x)\geq
-\log\left(\frac{d}{\sin\theta}\right)
-\log\left(1+\frac{d}{2d\sin\theta}\right)
=-\log d-\log\left(\frac{1+2\sin\theta}{2\sin^2\theta}\right). $$
We have the desired result. \end{proof}

\section{Expansions near $C^{1,\alpha}$-boundary}\label{sec-C-1,alpha-boundary}

In this section, we study
asymptotic behaviors near $C^{1,\alpha}$-portions of  $\partial\Omega$.
A similar estimate for solutions of
\eqref{eq-MainEq-HigherDim} and \eqref{eq-MainBoundary} can be found in
\cite{Loewner&Nirenberg1974} under the $C^{1,1}$-assumption
of the boundary.

\begin{theorem}\label{thrm-C-1,alpha-expansion}
Let $\Omega$ be a bounded domain in $\mathbb R^2$ and $\partial\Omega\cap B_{r_0}(x_0)$ be
$C^{1,\alpha}$ for some $x_0\in\partial\Omega$, $r_0>0$ and $\alpha\in (0,1]$. Suppose
$u\in C^2(\Omega)$ is a solution of \eqref{eq-MainEq}-\eqref{eq-MainBoundary}. Then,
$$|u(x)+\log d(x)|\le Cd^\alpha(x)\quad\text{for any }x\in\Omega\cap B_r(x_0),$$
where $d(x)$ is the distance from $x$ to $\partial\Omega$, and $r$ and $C$
are positive constants depending only on $r_0$, $\alpha$ and the $C^{1,\alpha}$-norm of
$\partial\Omega$.
\end{theorem}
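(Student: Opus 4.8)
The plan is to run a comparison argument, replacing the exact barriers (interior and exterior tangent balls) used for $C^2$-boundaries by barriers built on paraboloid-like regions adapted to the $C^{1,\alpha}$-regularity. Fix $x_0\in\partial\Omega$ and, after a rigid motion, assume $x_0$ is the origin and the inner normal points in the $x_2$-direction, so that near the origin $\partial\Omega$ is the graph $x_2=\varphi(x_1)$ with $\varphi(0)=0$, $\varphi'(0)=0$, and $|\varphi'(x_1)|\le C|x_1|^\alpha$; in particular $|\varphi(x_1)|\le C|x_1|^{1+\alpha}$. Given $x\in\Omega\cap B_r$ with $d=d(x)$ realized at $p\in\partial\Omega$, I want to trap $u(x)$ between solutions in a slightly-too-small ball contained in $\Omega$ and a slightly-too-large exterior ball, where the radii are comparable to $d$ up to a multiplicative error $1+O(d^\alpha)$. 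Concretely: there is an interior ball $B_{\rho_-}(q_-)\subset\Omega$ tangent to $\partial\Omega$ at $p$ with $\rho_-\ge c\,r$ but, more importantly for the estimate, I only need that $\mathrm{dist}(x,\partial B_{\rho_-}(q_-))\ge d(1-Cd^\alpha)$ for a ball of radius $\rho_-$ with $\rho_-\ge \delta_0$ fixed; this uses that the portion of $\partial\Omega$ within distance $O(d)$ of $p$ lies on one side of a sphere of radius $O(d^{-\alpha})\cdot d = O(d^{1-\alpha})$... — more cleanly, I use the graph bound: near $p$ the boundary is squeezed between two translates of the graph $|x_2|\le C|x_1|^{1+\alpha}$, and such a region sits outside a ball of radius comparable to $d^{1-\alpha}$ (when $\alpha<1$) touching at $p$, and inside a ball of the same order on the other side.

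The key steps, in order, are: (i) Set up the graph representation and record the two-sided bound $|\varphi(x_1)|\le C|x_1|^{1+\alpha}$ and $|\varphi'(x_1)|\le C|x_1|^\alpha$. (ii) For $x\in\Omega\cap B_r$ with nearest boundary point $p$, construct an interior tangent ball $B_{R}(q)\subset\Omega$ and an exterior tangent ball $B_{R'}(q')\subset\Omega^c$ at $p$; the new point is that $R,R'$ can be taken of order at least $d^{1-\alpha}$ (not merely of order $d$), because the obstruction to fitting a large ball is exactly the curvature-type term controlled by $d^\alpha$. Then $\mathrm{dist}(x,\partial B_R(q)) \ge d - Cd\cdot (d/R) \ge d(1-Cd^{\alpha})$ and similarly $\mathrm{dist}(x,\partial B_{R'}(q'))\le d(1+Cd^\alpha)$. (iii) Apply the maximum principle with the explicit solutions $u_{R,q}$ and $v_{R',q'}$ from Section \ref{sec-Existence}: from $u\le u_{R,q}$ and the formula $u_{R,q}=-\log d_{B}-\log(1-\frac{d_B}{2R})$ evaluated at $x$ with $d_B=\mathrm{dist}(x,\partial B_R(q))$, get $u(x)\le -\log d + Cd^\alpha$; from $u\ge v_{R',q'}$ get the lower bound $u(x)\ge -\log d - Cd^\alpha$, using $\log(1\pm t)=O(t)$ and $|\log(d_B/d)|\le Cd^\alpha$. (iv) Handle the case $\alpha=1$ separately but more easily, since then one recovers genuine interior and exterior tangent balls of radius $\ge c\,r$ and the argument reduces to the classical one giving the linear bound; and handle points $x$ whose nearest boundary point is outside the $C^{1,\alpha}$-coordinate patch by absorbing them into the constant $C$ via Lemma \ref{lemma-ExteriorCone}.

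The main obstacle is step (ii): making precise, with correct dependence on $d$, the claim that the $C^{1,\alpha}$ graph bound lets one fit tangent balls of radius of order $d^{1-\alpha}$ on each side near the contact point $p$, so that the relative error in passing from $d$ to $\mathrm{dist}(x,\partial B)$ is $O(d^\alpha)$ rather than $O(d)$ or worse. This is a geometric lemma about regions pinched between power-type graphs versus spheres; I expect to prove it by writing, in coordinates centered at $p$ with axes along the normal and tangent at $p$, the boundary as a graph $x_2=\psi(x_1)$ with $\psi(0)=\psi'(0)=0$ and $|\psi(x_1)|\le C|x_1|^{1+\alpha}$ on $|x_1|\le r$, and then choosing the interior ball to be $B_R(Re_2)$ with $R$ the largest radius such that $R-\sqrt{R^2-x_1^2}\le -C|x_1|^{1+\alpha}+\psi(x_1)$ fails to be violated... i.e. one checks $R-\sqrt{R^2-x_1^2}\approx x_1^2/(2R)\le$ the available room, which near $p$ forces only $x_1^2/R \lesssim |x_1|^{1+\alpha}$, hence $R\gtrsim |x_1|^{1-\alpha}$, and since the relevant $|x_1|$ is of order $d$ this yields $R\gtrsim d^{1-\alpha}$. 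Everything else — the maximum principle comparisons and the logarithmic expansions — is routine once the barrier radii are in hand. Finally, combining the two-sided bounds gives $|u+\log d|\le Cd^\alpha$ on $\Omega\cap B_r(x_0)$, as claimed.
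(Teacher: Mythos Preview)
Your overall strategy matches the paper's: compare $u$ with the explicit solutions in an interior ball and in the complement of an exterior ball, both of radius of order $d^{1-\alpha}$, and read off the $O(d^\alpha)$ error from the explicit logarithmic formulas. However, step (ii) as written has a genuine gap. You assert that one can fit \emph{tangent} balls of radius $R\sim d^{1-\alpha}$ at the nearest boundary point $p$, and you justify the choice of $R$ by saying ``the relevant $|x_1|$ is of order $d$.'' That is not correct: for the ball $B_R(Re_2)$ to lie in $\{x_2>\varphi(x_1)\}$ one needs $R-\sqrt{R^2-x_1^2}\ge \varphi(x_1)$ for \emph{all} $|x_1|\le R$, and in the worst case $\varphi(x_1)=M|x_1|^{1+\alpha}$ this forces $|x_1|^{1-\alpha}\ge 2MR$, which fails as $|x_1|\to 0$. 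In other words, a $C^{1,\alpha}$ graph with $\alpha<1$ need not admit an interior (or exterior) tangent ball of any positive radius at a given point, so the barrier you describe does not exist in general.

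The paper's fix is to \emph{shift} the balls by $Md^{1+\alpha}$ along the normal, abandoning tangency. The key elementary inequality is
\[
|x_1|^{1+\alpha}\le d^{1+\alpha}+\frac{1}{d^{1-\alpha}}\,x_1^{2}\qquad\text{for all }x_1\in\mathbb R,
\]
(check $|x_1|\le d$ and $|x_1|\ge d$ separately), which says exactly that the graph $M|x_1|^{1+\alpha}$ lies below the shifted parabola $Md^{1+\alpha}+Md^{\alpha-1}x_1^{2}$. Taking $r=d^{1-\alpha}/(2M)$ and center $q=(Md^{1+\alpha}+r)e_2$ then gives $B_r(q)\subset\Omega$ with $\mathrm{dist}(x,\partial B_r(q))=d-Md^{1+\alpha}$, and symmetrically $B_r(-q)\subset\Omega^c$ with $\mathrm{dist}(x,\partial B_r(-q))=d+Md^{1+\alpha}$. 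With this correction your steps (iii)--(iv) go through verbatim and yield $|u+\log d|\le Cd^\alpha$.
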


\begin{proof}
We take $R>0$ sufficiently small such that $\partial\Omega\cap B_{R}(x_0)$ is
$C^{1,\alpha}$.
We fix an $x\in\Omega\cap B_{R/4}(x_0)$ and take $p\in \partial\Omega$, also
near $x_0$, such that
$d(x)=|x-p|$.
Then,  $p\in  \partial\Omega\cap B_{R/2}(x_0)$.
By a translation and rotation, we assume $p=0$ and the $x_2$-axis is the interior normal to $\partial\Omega$
at 0. Then, $x$ is on the positive $x_2$-axis,
with $d=d(x)=|x|$, and the $x_1$-axis is the tangent line of $\partial\Omega$ at 0.
Moreover, a portion of $\partial \Omega$ near 0 can be expressed as a $C^{1,\alpha}$-function $\varphi$ of
$x_1\in (-s_0,s_0)$, with $\varphi(0)=0$, and
\begin{equation}\label{eq-boundaryC1alpha}
|\varphi(x_1)|\le M|x_1|^{1+\alpha}\quad\text{for any }x_1\in (-s_0,s_0).\end{equation}
Here, $s_0$ and $M$ are positive constants chosen to be uniform, independent of $x$.

We first consider the case $\alpha=1$. For any $r>0$,  the lower semi-circle of
$$x_1^2+(x_2-r)^2=r^2$$ satisfies
$x_2\ge x_1^2/(2r)$.
By fixing a constant $r$ sufficiently small,
\eqref{eq-boundaryC1alpha} implies
$$B_r(re_2)\subset\Omega\text{ and }B_r(-re_2)\cap \Omega=\emptyset.$$
Let $u_{r, re_2}$ and $v_{r, -re_2}$ be the solutions of
\eqref{eq-MainEq}-\eqref{eq-MainBoundary} in $B_r(re_2)$ and $\mathbb R^2\setminus B_r(-re_2)$,
given by \eqref{eq-solution-inside} and \eqref{eq-solution-outside}
respectively. Then, by the maximum principle, we have
$$v_{r,-re_2}\le u\le u_{r, re_2}\quad\text{in }B_r(re_2).$$
For the $x$ above in the positive $x_2$-axis with $|x|=d<r$, we obtain
$$-\log d-\log\left(1+\frac{d}{2r}\right)\le u\le -\log d-\log\left(1-\frac{d}{2r}\right).$$
This implies the desired result for $\alpha=1$.

Next, we consider $\alpha\in (0,1)$. Recall that $x$ is in the positive $x_2$-axis and $|x|=d$.
We first note
\begin{equation}\label{eq-boundaryC1alpha2}
|x_{1}|^{1+\alpha}\le d^{1+\alpha}+\frac{1}{d^{1-\alpha}}x_{1}^{2} \quad\text{for any }x_1\in\mathbb R.\end{equation}
This follows from the H\"older inequality, or more easily, by considering $|x_1|\le d$ and $|x_1|\ge d$
separately. Let $r=d^{1-\alpha}/(2M)$ and $q$ be the point on the positive $x_2$-axis such that
$|q|=Md^{1+\alpha}+r$. By taking $d$ sufficiently small, \eqref{eq-boundaryC1alpha}
and \eqref{eq-boundaryC1alpha2} imply
$$B_r(q)\subset\Omega\text{ and }B_r(-q)\cap \Omega=\emptyset.$$
Let $u_{r, q}$ and $v_{r, -q}$ be the solutions of
\eqref{eq-MainEq}-\eqref{eq-MainBoundary} in $B_r(q)$ and $\mathbb R^2\setminus B_r(-q)$,
given by \eqref{eq-solution-inside} and \eqref{eq-solution-outside}
respectively. Then, by the maximum principle, we have
$$v_{r,-q}\le u\le u_{r, q}\quad\text{in }B_r(q).$$
For the $x$ above,
dist$(x, \partial B_r(q))=d-Md^{1+\alpha}$ and
dist$(x, \partial B_r(-q))=d+Md^{1+\alpha}$. Evaluating at such an $x$, we obtain
\begin{align*}&-\log(d+Md^{1+\alpha})-\log\left(1+\frac{M}{d^{1-\alpha}}(d+Md^{1+\alpha})\right)\\
&\qquad
\le u\le -\log(d-Md^{1+\alpha})-\log\left(1-\frac{M}{d^{1-\alpha}}(d-Md^{1+\alpha})\right).\end{align*}
This implies the desired result for $\alpha\in (0,1)$. \end{proof}

\begin{remark} Domains are assumed to be bounded in Theorem \ref{thrm-C-1,alpha-expansion}
so that it is easier
to compare $u=\infty$ on $\partial\Omega$ with functions of finite values. Our main interest
is estimates near $x_0\in \partial\Omega$. A similar remark holds for some results
in the rest of the paper.
\end{remark}

\section{Expansions near $C^{2,\alpha}$-boundary}\label{sec-C-2,alpha-boundary}

In this section, we study
asymptotic behaviors near $C^{2,\alpha}$-portions of  $\partial\Omega$.
Under the condition of the $C^{2,\alpha}$-boundary, Kichenassamy  \cite{Kichenassamy2004JFA} proved that
$\exp(-u)$ is $C^{2,\alpha}$ up to the boundary by establishing Schauder
estimates for degenerate elliptic equations of Fuchsian type. Such a $C^{2,\alpha}$-regularity implies
an expansion up to order $1+\alpha$, which will be needed in this paper.
Since we only need this expansion, instead of the full $C^{2,\alpha}$-regularity in  \cite{Kichenassamy2004JFA}, we will
use the maximum principle to present a more direct proof, which is consistent with the proof of
Theorem \ref{thrm-C-1,alpha-expansion}.
It is straightforward to derive the upper bound and extra work is needed for lower bound.
We also note that the curvature of the boundary is only $C^\alpha$ in the present case
and hence cannot be differentiated.

\begin{theorem}\label{thrm-C-2,alpha-expansion}
Let $\Omega$ be a bounded domain in $\mathbb R^2$ and $\partial\Omega\cap B_{r_0}(x_0)$ be
$C^{2,\alpha}$ for some $x_0\in\partial\Omega$, $r_0>0$ and $\alpha\in (0,1)$. Suppose
$u\in C^2(\Omega)$ is a solution of \eqref{eq-MainEq}-\eqref{eq-MainBoundary}. Then,
$$\left|u(x)+\log d(x)-\frac{1}{2}\kappa(y) d(x)\right|\le Cd^{1+\alpha}(x)
\quad\text{for any }x\in\Omega\cap B_r(x_0),$$
where $d(x)$ is the distance from $x$ to $\partial\Omega$,
$\kappa(y)$ is the curvature of $\partial\Omega$ at $y\in\partial\Omega$ with
$|y-x|=d(x)$, and $r$ and $C$
are positive constants depending only on $r_0$, $\alpha$ and the $C^{2,\alpha}$-norm of
$\partial\Omega$.
\end{theorem}

\begin{proof}
We take $R>0$ sufficiently small such that $\partial\Omega\cap B_{2R}(x_0)$ is
$C^{2,\alpha}$ and that $d$ is $C^{2,\alpha}$ in $\Omega\cap B_{2R}(x_0)$.
The proof consists of several steps.

{\it Step 1.} Set
\begin{equation}
u=v-\log d.
\end{equation}
A straightforward calculation yields
\begin{equation}
\textit{S}(v)=0  \quad\text{in }\Omega,
\end{equation}
where
\begin{equation}\label{eq for v}
\textit{S}(v)=d\Delta v-\Delta d-\frac{1}{d}(e^{2v}-1).
\end{equation}
By Theorem \ref{thrm-C-1,alpha-expansion} for $\alpha=1$,
we have
$$|v|\leq C_{0}d\quad\text{in }\Omega\cap B_{R}(x_0),$$
for some constant $C_{0}$ depending only on the geometry of  $\Omega$.
In particular, $v=0$ on  $\partial\Omega\cap B_{R}(x_0)$.

To proceed, we denote by $(x', d)$ the principal coordinates in $\bar\Omega\cap B_R(x_0)$.
Then, 
$$\Delta v=\frac{\partial^{2}v}{\partial d^{2}} + G\frac{\partial^{2}v}{\partial x'^{2}}
+I_{x'} \frac{\partial v}{\partial x'}+I_d \frac{\partial v}{\partial d},$$
where $G$, $I_{x'}$ and $I_d$ are at least continuous functions in
$\bar\Omega\cap B_R(x_0)$. We note that $G$ has a positive lower bound
and $I_d$ has the form
\begin{equation}\label{laplace_d}
I_d = -\kappa +O(d^\alpha),
\end{equation}
where $\kappa$ is the curvature of $\partial\Omega$.
Set, for any constant $r>0$,
$$G_r =\{(x',d):|x'|\leq r, 0<d<r \}.$$

{\it Step 2.} We now construct supersolutions and prove an upper bound of $v$.
We set
\begin{equation}\label{eq-definition_w}w(x)=d(x'^{2}+d^2)^{\frac{\alpha}{2}},\end{equation}
and, for some positive constants $A$ and $B$ to be determined,
$$\overline{v}=\frac{1}{2}\kappa(0)d+ Aw+Bd^{1+\alpha}.$$
We write
$$\textit{S}(\overline{v})=d\Delta \overline{v}-\Delta d-\frac2d\overline{v}
-\frac1d(e^{2\overline{v}}-1-2\overline{v}).$$
First, we note
$$e^{2\overline{v}}\ge 1+2\overline{v}.$$
Then,
$$\textit{S}(\overline{v})
\le d\Delta \overline{v}-\Delta d-\frac2d\overline{v}.$$
Hence,
\begin{align*} \textit{S}(\overline{v})&\le \frac12\kappa(0)d\Delta d+Ad\Delta w+Bd\Delta d^{1+\alpha}\\
&\qquad-\Delta d
-\kappa(0)-2A(x'^2+d^2)^{\frac\alpha2}-2Bd^\alpha.\end{align*}
Straightforward calculations yield
$$|d \Delta w| \leq C(d^{\alpha}+w),$$
where $C$ is a positive constant depending only on the geometry of  $\Omega$ near $x_0$.
Note
$$|\Delta d+\kappa(0)|\leq K(|x'|^2+d^2)^{\frac\alpha2},$$
for some positive constant $K $ depending only on the geometry of  $\Omega$ near $x_0$.
Then,
\begin{align*} \textit{S}( \overline{v}) &\leq  CAd^{\alpha} +B[\alpha(\alpha+1)+(1+\alpha)dI_d-2]d^{\alpha}\\
&\qquad+(CA d-2A)(x'^{2}+d^2)^{\frac{\alpha}{2}}+K(|x'|^2+d^2)^{\frac\alpha2} +C d.\end{align*}
Since $\alpha<1$, we can take $r$ sufficiently small such that
$$2-\alpha(\alpha+1)-(1+\alpha)dI_d\ge c_0\quad\text{in } G_r,$$
for some positive constant $c_0$.
By taking $r$ small further and choosing $A\ge K+C$, we have
$$ \textit{S}( \overline{v}) \leq  CAd^{\alpha} -c_0Bd^{\alpha}
\quad\text{in }G_r.$$
We take $A$ large further such that
\begin{equation*}
C_0d \leq \frac{1}{2}\kappa(0)d+ Ad(x'^{2}+d^2)^{\frac{\alpha}{2}}+Bd^{1+\alpha} \quad\text{on } \partial G_r.
\end{equation*}
Then, we take $B$ large such that
$$c_0B\ge CA.$$
Therefore,
\begin{align*}
\textit{S}( \overline{v}) &\leq \textit{S}(v)  \quad \text{in }G_r, \\
 v &\leq \overline{v} \quad   \text{on }\partial G_r.
\end{align*}
By the maximum principle, we have $ v \leq \overline{v}$ in $G_r$.

{\it Step 3.} We now construct subsolutions and prove a lower bound of $v$.
By taking the same $w$ as in \eqref{eq-definition_w} and setting,
for some positive constants $A$ an $B$ to be determined,
$$\underline{v}=\frac{1}{2}\kappa(0)d-Aw-Bd^{1+\alpha}.$$
We first assume
\begin{equation}\label{subsl-bound1}
|\kappa(0)|r +A 2^{\frac{\alpha}{2}}r^{1+\alpha}+Br^{1
+\alpha} \le \frac{2-\alpha(\alpha+1)}{16}.
\end{equation}
Then,
$$ \left|\frac{1}{d}(e^{2\underline{v}}-1
-2\underline{v})\right|\\
\le 2\kappa^2(0)d + \frac{1}{2}[2-\alpha(\alpha+1)][A(x'^{2}+d^2)^{\frac{\alpha}{2}}+Bd^{\alpha}].
$$
Arguing as in Step 2, we obtain
\begin{align*} \textit{S}( \underline{v})
&\geq  -CA d^{\alpha} +B\left[1-\frac{1}{2}\alpha(\alpha+1)-(1+\alpha)dI_d\right]d^{\alpha}\\
&\quad+(A-CA d)(x'^{2}+d^2)^{\frac{\alpha}{2}}-K(|x'|^2+d^2)^{\frac\alpha2} - C d.\end{align*}
We require
\begin{equation}\label{eq-require_r}
d\le\frac{1}{2C}, \quad 1-\frac{1}{2}\alpha(\alpha+1)-(1+\alpha)dI_d\ge c_0\quad\text{in }G_r,\end{equation}
for some positive constant $c_0$. If $A\geq 2K+2C$, we have
$$ \textit{S}( \underline{v}) \geq  -CAd^{\alpha} +c_0Bd^{\alpha}. $$
If
$$c_0B\ge CA,$$
we have $\textit{S}( \underline{v}) \geq 0$.
In order to have $ v \geq \underline{v}$ on $\partial G_r$, it is sufficient to
require
$$|\kappa(0)|+C_0  \leq Ar^{\alpha}.$$
In summary, we first choose
$$A=\frac{|\kappa(0)|+C_0 }{r^{\alpha}},\quad B=\frac{AC}{c_0},$$
for some $r$ small to be determined. Then, we choose $r$ small satisfying \eqref{eq-require_r} such that
$A\geq 2K+2C$ and \eqref{subsl-bound1} holds.
Therefore, we have
\begin{align*}
\textit{S}( \underline{v}) &\geq \textit{S}(v)  \quad \text{in }G_r ,\\
 v &\geq \underline{v}  \quad  \text{on }\partial G_r.
\end{align*}
By the maximum principle, we have $ v \geq \underline{v}$ in $G_r$.

{\it Step 4.} Therefore, we obtain
$$\underline{v}\le v\le \overline{v}\quad\text{in }G_r.$$
By taking $x'=0$, we obtain, for any $d \in (0, r)$,
$$\left|v(0,d)-\frac{1}{2}\kappa(0) d\right|\leq Cd^{1+\alpha}.$$
This is the desired estimate.
\end{proof}

We point out that the proof above can be adapted to yield a similar result as in
Theorem \ref{thrm-C-2,alpha-expansion} for the equation \eqref{eq-MainEq-HigherDim}.

\section{Expansions near Isolated Singular Boundary Points}\label{sec-IsolatedSingular}

In this section, we study
asymptotic behaviors of $u$ near isolated singular boundary points
and aim to derive optimal estimates concerning leading terms. We will prove Theorem \ref{thrm-Main}
by a combination of conformal transforms and the maximum principle.

Throughout this section,  we will adopt notations from complex analysis
and denote by $z=(x,y)$ points in the plane.

We fix a boundary point; in the following, we always
assume this is the origin.
We assume $\partial\Omega$ in a neighborhood of the origin
consists of two $C^{2}$ curves $\sigma_1$ and
$\sigma_2$. Here, the origin is
an end of both $\sigma_1$ and $\sigma_2$.
Suppose $l_1$ and $l_2$ are two rays from the origin such that
$\sigma_1$ and $\sigma_2$ are tangent to $l_1$ and $l_2$ at the origin, respectively.
The rays $l_1$ and $l_2$ divide $\mathbb R^2$ into two cones and one of the cones
is naturally defined as the tangent cone of $\Omega$ at the origin. By a rotation, we assume
the tangent cone $V_\mu$ is given by, for some positive constant $\mu\in (0,2)$,
\begin{equation}\label{eq-Cone}
V_\mu=\{(r, \theta)\in\mathbb R^2:\, 0<r<\infty, \, 0<\theta<\mu \pi\}.\end{equation}
Here, we used the polar coordinates in $\mathbb R^2$. In fact, the tangent cone $V_\mu$
can be characterized by the following: For any $\varepsilon>0$, there exists an $r_0>0$ such that
$$\{(r,\theta): r\in(0,r_0), \theta\in(\varepsilon, \mu\pi-\varepsilon)\}
\subset \Omega\cap B_{r_0}\subset
\{(r,\theta): r\in (0,r_0), \theta\in (-\varepsilon, \mu\pi+\varepsilon)\}.$$

Our goal is to approximate solutions near an isolated singular boundary point by
the corresponding solutions in tangent cones. To this end, we express explicitly the solutions
in tangent cones. For any constant $\mu\in (0,2)$, consider the unbounded cone
$V_{\mu}$ defined by \eqref{eq-Cone}.
Then, the solution of
\eqref{eq-MainEq}-\eqref{eq-MainBoundary} in $V_\mu$ is given by
\begin{equation}\label{eq-Solution-Cone}
v_\mu= -\log\left(\mu  r  \sin\frac{\theta}{\mu}\right).\end{equation}
For $\mu\in (0,1)$ and $\theta\in (0, \mu\pi/2)$, we have $d=r\sin\theta$ and
\begin{equation}\label{eq-Solution-Cone1}v_\mu=-\log d-\log\frac{\mu\sin\frac{\theta}\mu}{\sin\theta}.
\end{equation}
For $\mu\in (1,2)$, if $\theta\in (0, \pi/2)$, we have $d=r\sin\theta$ and the identity above;
if $\theta\in (\pi/2, \mu\pi/2)$, we have $d=r$ and
\begin{equation}\label{eq-Solution-Cone2}v_\mu=-\log d-\log\left(\mu\sin\frac{\theta}{\mu}\right).
\end{equation}
We note that the second terms in
\eqref{eq-Solution-Cone1} and \eqref{eq-Solution-Cone2} are constant along the ray from the origin.
This suggests that Lemma \ref{lemma-ExteriorCone}
cannot be improved in general if the boundary has a singularity.

Next, we modify the solution in \eqref{eq-Solution-Cone} and construct super- and subsolutions. Define
\begin{equation}\label{eq-superSolution-Cone}
\overline{u}_\mu=v_\mu+\log  \left(1+A|z|^{\frac{\sqrt{2}}{\mu}}\right),\end{equation}
and
\begin{equation}\label{eq-subSolution-Cone}\underline{u}_\mu=v_\mu-\log  \left(1+A|z|^{\frac{1}{\mu}}\right),
\end{equation}
where $v_\mu$ is given by \eqref{eq-Solution-Cone} and $A$ is a positive constant.

\begin{lemma}\label{lemma-Super-sub-solutions}
Let $V_{\mu}$ be the cone defined in \eqref{eq-Cone},
and $\overline{u}_\mu$ and $\underline{u}_\mu$ be defined by
\eqref{eq-superSolution-Cone} and \eqref{eq-subSolution-Cone}, respectively.
Then, $\overline{u}_\mu$ is a supersolution
and $\underline{u}_\mu$ is a subsolution of
\eqref{eq-MainEq}  in $V_\mu$, respectively.
\end{lemma}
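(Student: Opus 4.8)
The plan is to compute $\Delta \overline{u}_\mu$ and $\Delta \underline{u}_\mu$ directly and compare with $e^{2\overline{u}_\mu}$ and $e^{2\underline{u}_\mu}$, exploiting that $v_\mu$ solves \eqref{eq-MainEq} exactly. Since $v_\mu = -\log(\mu r \sin\tfrac{\theta}{\mu})$ and $\mu r \sin\tfrac\theta\mu$ is (up to the constant $\mu$) the imaginary part of the holomorphic map $z \mapsto z^{1/\mu}$ on $V_\mu$, the function $v_\mu$ is harmonic on $V_\mu$ except that $\Delta v_\mu = e^{2v_\mu}$; more useful is that $e^{v_\mu} = (\mu r \sin\tfrac\theta\mu)^{-1}$, so $e^{2v_\mu} = (\mu r\sin\tfrac\theta\mu)^{-2}$, which blows up like $r^{-2}$ near the vertex and like $d^{-2}$ near the two boundary rays. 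Write $\overline{u}_\mu = v_\mu + \phi$ with $\phi = \log(1 + A|z|^{\sqrt2/\mu})$; then
\begin{equation*}
\Delta \overline{u}_\mu = \Delta v_\mu + \Delta \phi = e^{2v_\mu} + \Delta\phi,
\end{equation*}
and $e^{2\overline{u}_\mu} = e^{2v_\mu} e^{2\phi} = e^{2v_\mu}(1 + A|z|^{\sqrt2/\mu})^2$. So $\overline{u}_\mu$ is a supersolution iff $\Delta\phi \le e^{2v_\mu}\big[(1+A|z|^{\sqrt2/\mu})^2 - 1\big]$, i.e.
\begin{equation*}
\Delta\phi \le e^{2v_\mu}\big(2A|z|^{\sqrt2/\mu} + A^2|z|^{2\sqrt2/\mu}\big).
\end{equation*}

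First I would compute $\Delta\phi$ for $\phi = \log(1+A|z|^\gamma)$ with $\gamma = \sqrt2/\mu$: using $\Delta(\log(1+Ar^\gamma)) = \frac{1}{r}\frac{d}{dr}\big(r \frac{A\gamma r^{\gamma-1}}{1+Ar^\gamma}\big)$, one gets $\Delta\phi = \dfrac{A\gamma^2 r^{\gamma-2}}{(1+Ar^\gamma)^2}$. Thus the desired supersolution inequality becomes
\begin{equation*}
\frac{A\gamma^2 r^{\gamma-2}}{(1+Ar^\gamma)^2} \le \frac{2A r^{\gamma} + A^2 r^{2\gamma}}{\mu^2 r^2 \sin^2\frac\theta\mu},
\end{equation*}
and after multiplying through by $\mu^2 r^{2-\gamma}\sin^2\tfrac\theta\mu$ and using $\gamma^2\mu^2 = 2$, this reduces to $\dfrac{2}{(1+Ar^\gamma)^2} \le \dfrac{(2 + A r^\gamma)\sin^2\frac\theta\mu}{\sin^2\frac\theta\mu}$... more precisely one checks it reduces to an inequality of the form $2 \le (2+Ar^\gamma)(1+Ar^\gamma)^2 \sin^2\tfrac\theta\mu \cdot(\text{something})$ — the point being that the worst case is $\theta \to 0$ or $\theta\to\mu\pi$ where $\sin\tfrac\theta\mu \to 0$, but there $r^{\gamma-2}$ on the left is controlled because... actually the key realization is that $\sin\tfrac\theta\mu$ cancels: $d = r\sin\theta \le r$ near the boundary but we should instead bound $\sin^2\tfrac\theta\mu$ from below in the relevant region. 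I would split into the region near the vertex versus near the rays and, for the rays, use that $e^{2v_\mu}$ itself carries the singular weight $\sin^{-2}\tfrac\theta\mu$ which dominates the mild power $r^{\gamma-2}$ coming from $\Delta\phi$. The subsolution case is symmetric: with $\psi = -\log(1+A|z|^{1/\mu})$, $\Delta\psi = -\dfrac{A\mu^{-2}r^{1/\mu - 2}}{(1+Ar^{1/\mu})^2}\cdot(\text{correction from }(\cdot)^2)$ — note $\Delta(-\log(1+Ar^\beta)) = -\dfrac{A\beta^2 r^{\beta-2}}{(1+Ar^\beta)^2} + \dfrac{2A^2\beta^2 r^{2\beta-2}}{(1+Ar^\beta)^3}$ wait, recompute: $\Delta\psi = -\Delta\log(1+Ar^\beta)$ has the same formula with a sign, so $\Delta\psi = -\dfrac{A\beta^2 r^{\beta-2}}{(1+Ar^\beta)^2}$ is negative — and the subsolution inequality $\Delta\underline{u}_\mu \ge e^{2\underline{u}_\mu}$ becomes $e^{2v_\mu}\big[(1+Ar^\beta)^{-2} - 1\big] \ge \Delta\psi$, i.e. a negative quantity on the right being below a negative quantity on the left, which should hold for $A$ large (larger $A$ makes $\psi$ more negative but also makes $e^{2v_\mu}[(1+Ar^\beta)^{-2}-1]$ more negative — so one must track the rates $\beta = 1/\mu$ versus $\beta - 2$ carefully, and here $\beta - 2 < 0$ always, while the exponent $2v_\mu$ contributes $-2$, so the balance is $r^{-2}$ against $r^{1/\mu - 2}$, and since $1/\mu > 0$ the $e^{2v_\mu}$ term wins near the vertex).

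The main obstacle I expect is the behavior near the two boundary rays $\theta = 0$ and $\theta = \mu\pi$, where $\sin\tfrac\theta\mu$ vanishes and $e^{2v_\mu} \sim r^{-2}(\sin\tfrac\theta\mu)^{-2}$ becomes enormous, while $\Delta\phi$ stays $\sim A\gamma^2 r^{\gamma - 2}$ independent of $\theta$ — so there the inequality is easy for the supersolution (big right side) but one must double-check the \emph{subsolution} inequality $\Delta\psi \le e^{2v_\mu}[(1+Ar^\beta)^{-2}-1]$ near the rays, since both sides are negative and the right side, being $\sim -e^{2v_\mu}$, is very negative there, so that direction is also fine. The genuinely delicate point is choosing the exponents $\sqrt2/\mu$ and $1/\mu$: the value $\gamma = \sqrt2/\mu$ is exactly what makes $\gamma^2\mu^2 = 2 > 0$ so that the leading term of $\Delta\phi$ gets absorbed; I would verify that any exponent in $(1/\mu, 2/\mu)$ works for the supersolution and present $\sqrt2/\mu$ as a clean choice, and symmetrically that $1/\mu$ (or anything positive and $<2/\mu$) works for the subsolution — the constraint being that the perturbation exponent must be positive (so the correction vanishes at the vertex, preserving the boundary blow-up) and small enough that $\Delta$ of the perturbation is dominated by the gain/loss in $e^{2v_\mu}$. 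Once the differential inequalities are verified pointwise on $V_\mu$, the conclusion is immediate since "supersolution" and "subsolution" of \eqref{eq-MainEq} mean precisely $\Delta \overline{u}_\mu \le e^{2\overline{u}_\mu}$ and $\Delta\underline{u}_\mu \ge e^{2\underline{u}_\mu}$.
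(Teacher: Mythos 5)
Your setup is the same as the paper's: write $\overline{u}_\mu = v_\mu + \phi$, use $\Delta v_\mu = e^{2v_\mu}$, compute $\Delta\phi$ with the radial Laplacian, and reduce the supersolution inequality to an algebraic statement. Your formula $\Delta\phi = A\gamma^2 r^{\gamma-2}/(1+Ar^\gamma)^2$ is correct and agrees with the paper's (which writes the same thing as $\frac{\gamma^2}{r^2}(s-s^2)$ with $s = Ar^\gamma/(1+Ar^\gamma)$). The trouble starts at the reduction step, and it is a real gap rather than a stylistic one.

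After multiplying $\frac{A\gamma^2 r^{\gamma-2}}{(1+Ar^\gamma)^2} \le \frac{2Ar^\gamma+A^2r^{2\gamma}}{\mu^2 r^2\sin^2(\theta/\mu)}$ by $\mu^2 r^{2-\gamma}\sin^2(\theta/\mu)/A$ and using $\gamma^2\mu^2 = 2$, the correct reduced inequality is
\begin{equation*}
\frac{2\sin^2\frac{\theta}{\mu}}{(1+Ar^\gamma)^2} \;\le\; 2 + Ar^\gamma,
\end{equation*}
which holds trivially for every $\theta\in(0,\mu\pi)$, every $r>0$, and every $A>0$, since $\sin^2\le 1$ and $(1+Ar^\gamma)^2\ge 1$. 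In your writeup the $\sin^2(\theta/\mu)$ factor migrates to the wrong side of the inequality (your displayed ``reduction'' has it cancelling on the right and absent on the left), which is why you then perceive a danger near $\theta\to 0$ and propose splitting into regions near the vertex versus near the rays. That splitting is unnecessary: once the factor is placed correctly, the inequality is global and immediate, exactly as in the paper's one-line chain of estimates. The same issue recurs in your subsolution discussion: the correct reduction (with $\beta=1/\mu$, $\beta^2\mu^2=1$) is $\sin^2\frac{\theta}{\mu}\le 2+Ar^\beta$, again trivial, and there is no need to ``track the rates $\beta-2$ versus $-2$'' or to worry about which side ``wins near the vertex.'' Two further inaccuracies: the super/subsolution properties hold for \emph{every} $A>0$, not just for $A$ large (you need $A$ large only later, for the comparison argument in Lemma~\ref{lemma-Localization}, not here); and your claimed admissible exponent range $(1/\mu,2/\mu)$ for the supersolution is wrong — the simple argument above works precisely for $\gamma\in(0,\sqrt 2/\mu]$, since the reduced inequality reads $\gamma^2\mu^2\sin^2\frac\theta\mu/(1+Ar^\gamma)^2\le 2+Ar^\gamma$ and the constraint comes from $\gamma^2\mu^2\le 2$ at $\theta=\mu\pi/2$, $r\to 0$. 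So: right approach, but the algebraic verification is incomplete and contains a sign-of-placement error that manufactured a difficulty that is not there.
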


\begin{proof} We calculate in polar coordinates. For functions of $r$ only, we have
$$\Delta=\partial_{rr}+\frac1r\partial_r.$$
Note $r=|z|$. A straightforward calculation yields
$$\Delta\left(\log  \left(1+A|z|^{\frac{\sqrt{2}}{\mu}}\right)\right)
=\frac{2}{\mu^2r^2} \cdot \frac{Ar^{\frac{\sqrt{2}}{\mu}} }{1+Ar^{\frac{\sqrt{2}}{\mu}}}
- \frac{2}{\mu^2r^2} \left(\frac{Ar^{\frac{\sqrt{2}}{\mu}} }{1+Ar^{\frac{\sqrt{2}}{\mu}}} \right)^2.$$
Then,
\begin{align*}
\Delta \overline{u}_\mu
&=\frac{1}{\mu^2r^2\sin^2\frac\theta\mu}+\frac{2}{\mu^2r^2}
\cdot \frac{Ar^{\frac{\sqrt{2}}{\mu}} }{1+Ar^{\frac{\sqrt{2}}{\mu}}}
- \frac{2}{\mu^2r^2} \left(\frac{Ar^{\frac{\sqrt{2}}{\mu}} }{1+Ar^{\frac{\sqrt{2}}{\mu}}} \right)^2\\
&=\frac{1}{\mu^2r^2\sin^2\frac\theta\mu}
\left(1+\frac{2Ar^{\frac{\sqrt{2}}{\mu}} }{1+Ar^{\frac{\sqrt{2}}{\mu}}} \sin^2\frac\theta\mu
- 2\left(\frac{Ar^{\frac{\sqrt{2}}{\mu}} }{1+Ar^{\frac{\sqrt{2}}{\mu}}} \right)^2 \sin^2\frac\theta\mu\right)\\
&\le\frac{1}{\mu^2r^2\sin^2\frac\theta\mu}\left(1+2Ar^{\frac{\sqrt{2}}{\mu}}
\right)
\le \left(\frac{1}{\mu r\sin\frac\theta\mu}\right)^2\left(1+Ar^{\frac{\sqrt{2}}{\mu}} \right)^2=e^{2\overline{u}_\mu}.
\end{align*}
Hence, $\overline{u}_\mu$ is a supersolution in $V_\mu$.

The proof for $\underline{u}_\mu$ is similar. In fact, we have
\begin{align*}
\Delta \underline{u}_\mu
&=\frac{1}{\mu^2r^2\sin^2\frac\theta\mu}
-\frac{1}{\mu^2r^2} \cdot \frac{Ar^{\frac{1}{\mu}} }{1+Ar^{\frac{1}{\mu}}}
+ \frac{1}{\mu^2r^2} \left(\frac{Ar^{\frac{1}{\mu}} }{1+Ar^{\frac{1}{\mu}}} \right)^2\\
&=\frac{1}{\mu^2r^2\sin^2\frac\theta\mu}
\left(1-\frac{Ar^{\frac{1}{\mu}} }{1+Ar^{\frac{1}{\mu}}} \sin^2\frac\theta\mu
+\left(\frac{Ar^{\frac{1}{\mu}} }{1+Ar^{\frac{1}{\mu}}} \right)^2 \sin^2\frac\theta\mu\right)\\
& \ge\frac{1}{\mu^2r^2\sin^2\frac\theta\mu}\left(1-\frac{Ar^{\frac{1}{\mu}} }{1+Ar^{\frac{1}{\mu}}} \right)
\ge
\left(\frac{1}{\mu r\sin\frac\theta\mu}\right)^2\left(1+Ar^{\frac{1}{\mu}} \right)^{-2}=e^{2\underline{u}_\mu}.
\end{align*}
Hence, $\underline{u}_\mu$ is a subsolution in $V_\mu$.
\end{proof}

Next,  we quote a classical formula describing how solutions of \eqref{eq-MainEq}
change  under one-to-one holomorphic mappings. See \cite{Bandle&Flucher1996}.


\begin{lemma}\label{lemma-solution_under_conformal_trans}
Let $ \Omega_1$ and $\Omega_2$ be two domains in $\mathbb R^2$.
Suppose $u_2\in C^{2}(\Omega_2)$ is a solution of
\eqref{eq-MainEq}
in $\Omega_2$
and $f$ is  a one-to-one holomorphic function
from $\Omega_1$ onto $\Omega_2$. Then,
$$u_1(z)=u_2(f(z))+\log|f'(z)|$$ is a solution of
\eqref{eq-MainEq}
in $\Omega_1$.
\end{lemma}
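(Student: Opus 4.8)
The plan is to verify the claim by a direct computation, exploiting the conformal invariance of the Laplacian in two dimensions. Since $f$ is holomorphic and one-to-one on $\Omega_1$, it is a biholomorphism onto $\Omega_2$ with $f'(z)\neq 0$ everywhere on $\Omega_1$; in particular $\log|f'(z)|$ is a well-defined smooth (indeed harmonic) function on $\Omega_1$, so $u_1$ is smooth. The key identity I would record first is the conformal scaling law for the Laplacian: if $w=f(z)$ with $f$ holomorphic, then for any $C^2$ function $g$ on $\Omega_2$,
\begin{equation*}
\Delta_z\bigl(g\circ f\bigr)(z)=|f'(z)|^2\,(\Delta_w g)(f(z)).
\end{equation*}
This follows from the chain rule together with the Cauchy--Riemann equations, or most cleanly by writing $\Delta=4\partial_z\partial_{\bar z}$ and using $\partial_{\bar z}(g\circ f)=(\partial_{\bar w}g)\circ f\cdot\overline{f'}$ and $\partial_z(g\circ f)=(\partial_w g)\circ f\cdot f'$.

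Next I would use that $\log|f'(z)|$ is harmonic on $\Omega_1$: since $f'$ is holomorphic and nonvanishing, $\log f'$ has a local holomorphic branch, and $\log|f'|=\operatorname{Re}\log f'$, hence $\Delta_z\log|f'(z)|=0$. Combining these two facts:
\begin{equation*}
\Delta_z u_1(z)=\Delta_z\bigl(u_2\circ f\bigr)(z)+\Delta_z\log|f'(z)|
=|f'(z)|^2\,(\Delta_w u_2)(f(z))+0.
\end{equation*}
Now I invoke that $u_2$ solves \eqref{eq-MainEq} in $\Omega_2$, i.e. $\Delta_w u_2=e^{2u_2}$, so the right-hand side equals $|f'(z)|^2 e^{2u_2(f(z))}$. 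Finally, from the definition $u_1(z)=u_2(f(z))+\log|f'(z)|$ we get $e^{2u_1(z)}=e^{2u_2(f(z))}|f'(z)|^2$, which is exactly the expression just obtained. Hence $\Delta_z u_1=e^{2u_1}$ in $\Omega_1$, as claimed.

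There is no serious obstacle here; the statement is essentially the classical fact that Liouville's equation describes a conformal metric of curvature $-1$, and the metric $e^{2u_2}|dw|^2$ pulls back under the conformal map $w=f(z)$ to $e^{2u_2(f(z))}|f'(z)|^2|dz|^2=e^{2u_1(z)}|dz|^2$, which therefore again has curvature $-1$, i.e. $u_1$ again solves \eqref{eq-MainEq}. The only point requiring a word of care is the harmonicity of $\log|f'|$, which uses that $f$ is locally injective so that $f'$ has no zeros; I would state this explicitly. Note also that the lemma as phrased concerns only the interior equation \eqref{eq-MainEq} and not the boundary condition \eqref{eq-MainBoundary}, so no boundary analysis is needed.
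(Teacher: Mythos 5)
Your proof is correct. Where the paper simply invokes the geometric fact that the Gauss curvature of a conformal metric $e^{2u}|dz|^2$ is $-e^{-2u}\Delta u$, so that pulling back $g_2=e^{2u_2}|dw|^2$ by the conformal map $f$ yields another curvature $-1$ metric $e^{2u_1}|dz|^2$, you carry out the underlying PDE computation directly: the conformal scaling law $\Delta_z(g\circ f)=|f'|^2(\Delta_w g)\circ f$ (cleanly via $\Delta=4\partial_z\partial_{\bar z}$), the harmonicity of $\log|f'|$ (correctly flagged as using $f'\neq 0$, which follows from injectivity), and then the algebraic identity $e^{2u_1}=|f'|^2 e^{2u_2\circ f}$. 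This is a more elementary route in that it does not presuppose the curvature formula, and it makes explicit exactly where injectivity of $f$ enters; the paper's argument is shorter but leans on the reader's familiarity with the curvature of conformal metrics. You also note the geometric interpretation at the end, so the two proofs are really the same argument at different levels of unpacking. Both are complete.
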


\begin{proof}
Note that $g_2=e^{ 2u_2 }(dx\otimes dx+dy\otimes dy)$ is
a complete metric with constant Gauss curvature $-1$ on $\Omega_{2}$.
Since the Gauss curvature of the pull-back metric remains the same
under the conformal mapping,
then $g_1=f^{*}g_2=e^{ 2u_1 }(dx\otimes dx+dy\otimes dy)$ is
a complete metric with constant Gauss curvature $-1$ on $\Omega_{1}$.
Hence, $u_1$ solves \eqref{eq-MainEq}
in $\Omega_1$.
\end{proof}

Next, we prove that asymptotic expansions near singular boundary points up to certain orders
are local properties.

\begin{lemma}\label{lemma-Localization}
Let  $\Omega_{1}$ and $\Omega_{2}$ be two domains
which coincide in $B_{r_0}$, for some $r_0>0$, and let
$\partial\Omega_1\cap B_{r_0}$ consist of two $C^{2}$-curves
$\sigma_1$ and $\sigma_2$ intersecting at the origin with an angle $\mu\pi$, for some
constant $\mu\in (0,2)$. Suppose $V_\mu$ is the
tangent cone of $\Omega_1$ and $\Omega_2$ at the origin,
and that $u_1$ and $u_2$
are the $C^2$-solutions of \eqref{eq-MainEq}-\eqref{eq-MainBoundary}
in $\Omega_{1}$ and $\Omega_{2}$, respectively. Then,
\begin{equation}\label{sol-loc}
|u_1-u_2|\le C|z|^{\frac{1}{\mu}}\quad\text{in }\Omega_1\cap B_r,
\end{equation}
where $r$ and $C$  are positive constants depending only on $r_0$,  $\mu$ and the
$C^2$-norms of $\sigma_1$ and $\sigma_2$.
\end{lemma}

\begin{proof}
Taking $\widetilde{\mu}$ such that $\mu<\widetilde{\mu}<\min \{\sqrt{2}\mu ,2\}$ and set
\begin{equation}
\widetilde{V}_{\widetilde{\mu}}=\left\{(r, \theta)\in\mathbb R^2:\, 0<r<\infty,
\, -\frac{\widetilde{\mu}-\mu}{2}\pi<\theta<\frac{\widetilde{\mu}+\mu}{2}\pi\right\}.\end{equation}
For some constant $\delta_{1}>0,$
we have
$$\Omega_1\cap B_{\delta_1}\subseteq \widetilde{V}_{\widetilde{\mu}}.$$
Set
$$\widetilde \theta=\theta+\frac12(\widetilde \mu-\mu)\pi.$$
By Lemma \ref{lemma-ExteriorCone}, we have, for $A_1$ sufficiently large,
\begin{equation*}
u_1(z)\ge-\log\left(\widetilde{\mu } |z|  \sin\frac{\widetilde{\theta}}{\widetilde{\mu}}\right)
-\log  \left(1+A_1|z|^{\frac{1}{\widetilde{\mu}}}\right)
\quad\text{on } \Omega_1\cap \partial B_{\delta_1}.\end{equation*}
The estimate above obviously holds on $\partial\Omega_1\cap B_{\delta_1}$.
By Lemma \ref{lemma-Super-sub-solutions} and the maximum principle, we have
\begin{equation}\label{eq-subsolution-3}
u_1(z)\ge-\log\left(\widetilde{\mu } |z| \sin\frac{\widetilde{\theta}}{\widetilde{\mu}}\right)
-\log  \left(1+A_1|z|^{\frac{1}{\widetilde{\mu}}}\right)
\quad\text{in } \Omega_1\cap B_{\delta_1}. \end{equation}
In particular, we can take $\delta_2<\delta_1$ such that
\begin{equation*}
e^{2u_1}\ge\frac{1}{2\mu^{2}|z|^2}\quad\text{in } \Omega_1\cap B_{\delta_2}. \end{equation*}
As in the proof of Lemma \ref{lemma-Super-sub-solutions},
we can verify that $u_1-\log  \left(1+A|z|^{\frac{1}{\mu}}\right)$ is a subsolution of
\eqref{eq-MainEq}  in $\Omega_1\bigcap B_{\delta_2}$.
By Lemma \ref{lemma-ExteriorCone} and the maximum principle, we have,
for $A$ sufficiently large,
\begin{equation*}
u_1\leq u_2 + \log  \left(1+A|z|^{\frac{1}{\mu}}\right)\quad\text{in } \Omega_1\cap B_{\delta_2}.
\end{equation*}
Similarly, we have
\begin{equation*}
u_2\leq u_1 + \log  \left(1+A|z|^{\frac{1}{\mu}}\right)\quad\text{in } \Omega_1\cap B_{\delta_2}.
\end{equation*}
This implies the desired result.
\end{proof}

Now we prove a simple calculus result.

\begin{lemma}\label{curve-distance-x-x'-1}
Let $\sigma$ be a curve defined by a function $y=\varphi(x)\in C^{1,\alpha}([0, \delta])$, for some
constants $\alpha\in (0,1]$ and $\delta> 0$,  satisfying $\varphi(0)=0$ and  \begin{equation*}
|\varphi' (x)| \leq Mx^{\alpha}, \end{equation*}
for some positive constant $M$.
For any given point $z=(x,y)$ with $0 < x <\delta$ and $y>\varphi(x)$, let $p=(x', \varphi(x'))$
be any closest point to $z$ on $\sigma$ with the distance $d$.
Then, for $|z|$ sufficient small,
$$x' \le 2|z|.$$
Moreover, if $|y| \leq x/4 $, then
$$|x-x'|\le C d x^{\alpha},$$
where $C$ is a positive constant depending only on $M$ and $\alpha$.
\end{lemma}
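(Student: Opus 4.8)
The plan is to set up coordinates so that $\sigma$ is a graph over the $x$-axis near the origin with the stated $C^{1,\alpha}$ bound on $\varphi'$, and then extract the two claimed inequalities from elementary estimates on the distance function. First I would establish the bound $x'\le 2|z|$. The point $p=(x',\varphi(x'))$ minimizes the distance to $z=(x,y)$ among points of $\sigma$, so in particular $d=|z-p|\le|z-(0,0)|=|z|$, using that the origin lies on $\sigma$. Then $x'=|x'|\le|p|\le|p-z|+|z|=d+|z|\le 2|z|$, which is the first claim (valid once $|z|$, hence $\delta$, is small enough that $p$ stays in the parametrized portion $[0,\delta]$ of $\sigma$).

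For the second, sharper estimate I would assume $|y|\le x/4$ and exploit that $p$ is the foot of the perpendicular from $z$, so the vector $z-p$ is orthogonal to the tangent of $\sigma$ at $p$, i.e.
\[
(x-x')\cdot 1+(y-\varphi(x'))\cdot\varphi'(x')=0,
\]
giving
\[
x-x'=-\bigl(y-\varphi(x')\bigr)\varphi'(x').
\]
Now $|\varphi'(x')|\le M(x')^\alpha\le M(2|z|)^\alpha$, and one checks $|z|\le C x$ in the regime $|y|\le x/4$ (since then $|z|^2=x^2+y^2\le\tfrac{17}{16}x^2$), so $|\varphi'(x')|\le C' x^\alpha$ with $C'$ depending only on $M,\alpha$. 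It remains to bound $|y-\varphi(x')|$ by $Cd$. Clearly $|y-\varphi(x')|\le|z-p|+|x-x'|\cdot 0$ is not quite it; rather $|y-\varphi(x')|$ is exactly the second coordinate of $z-p$, so $|y-\varphi(x')|\le|z-p|=d$. Combining, $|x-x'|=|y-\varphi(x')|\,|\varphi'(x')|\le d\cdot C' x^\alpha$, which is the desired bound.

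The one point requiring a little care — and the step I expect to be the mild obstacle — is justifying that the closest point $p$ actually satisfies the orthogonality relation, i.e. that the minimum is interior to the parameter interval rather than at an endpoint, and that $\sigma$ is regular there. For this I would use that $x'\le 2|z|$ is small while the curve has length comparable to $\delta$, so for $|z|$ small the foot $p$ lies in the open portion where $\varphi\in C^{1,\alpha}$; differentiability of $x'\mapsto|z-(x',\varphi(x'))|^2$ then gives the stationarity condition. A secondary technical check is that $p$ is unique (or at least that any minimizer has $x'\le 2|z|$ and satisfies the orthogonality relation), which follows because near the origin $\sigma$ is a small $C^1$ perturbation of the $x$-axis, so the nearest-point projection is well defined in a neighborhood. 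With these in place the two displayed estimates follow from the routine computations above, and the constant $C$ depends only on $M$ and $\alpha$ as claimed.
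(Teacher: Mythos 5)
Your proof is correct and follows essentially the same route as the paper: both derive $x'\le 2|z|$ from $d\le|z|$ and the triangle inequality, and both use the first-order condition $x-x' = -(y-\varphi(x'))\varphi'(x')$ together with $|y-\varphi(x')|\le d$ and $|\varphi'(x')|\le M(x')^\alpha\lesssim x^\alpha$ (you bound $x'$ via $2|z|\le\tfrac52 x$, the paper directly via $x'\le\tfrac52 x$, a cosmetic difference). Your extra care in checking that the minimizer is an interior critical point is a reasonable addition; the paper leaves this implicit.
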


\begin{proof} First, we note $d\le |z|$ since $d$ is the distance of $z$ to $\sigma$. Then,
$$x'\le |p|\le |z|+ |z-p|=|z|+d\le 2|z|.$$
Next, for $x'\in (0,\delta)$,  $x'$ is characterized by
\begin{equation*}
\frac{d}{dt}[(x-t)^2  + (y-  \varphi(t))^2 ]|_{t=x'}=0,
\end{equation*}
or
\begin{equation*}
x-x'= (y-  \varphi(x'))\varphi' (x').
\end{equation*}
If $|y| \leq x/4 $, then $|z|\le 5x/4$ and hence $x'\le 5x/2$. Moreover,
$|y-\varphi(x')|\le d$. Then,
\begin{equation}\label{curve -distance-x-x'-2}
|x-x'| \leq d | \varphi' (x')|.
\end{equation}
This implies the desired result.
\end{proof}

We are ready to discuss the case when the opening angle of the tangent cone of $\Omega$
at the origin is less than $\pi$. We first introduce the leading term.
Let $\partial\Omega$ in a neighborhood of the origin consist of two $C^{2}$-curves
$\sigma_1$ and $\sigma_2$ intersecting at the origin at an angle $\mu\pi$, for some
constant $\mu\in (0,1]$. Define,
for any $z\in \Omega$,
\begin{equation}\label{eq-definition_f0}
f_\mu(z)=- \log \left(\mu |z| \sin \frac{\arcsin \frac{d(z)}{|z|}}{\mu} \right),\end{equation}
where $d$ is the distance to
$\partial\Omega$.
We can also write, for $z$ sufficiently small,
$$
f_{\mu}(z)=
\begin{cases}
-\log (\mu |z|  \sin\frac{\arcsin\frac{d_{1}(z)}{|z|}}{\mu})
& \text{if } d_1(z) \le d_2(z),\\
-\log(\mu |z| \sin\frac{\arcsin\frac{d_{2}(z)}{|z|}}{\mu})
& \text{if }d_1(z)>d_2(z),\\
\end{cases}
$$
where $d_1$ and $ d_2$ are the distances to $\sigma_1$ and $ \sigma_2$, respectively. 
We note that $f_{\mu}=-\log d$ if $\mu=1$.

\begin{theorem}\label{thrm-SmallAngles}
Let $\Omega$ be a bounded domain in $\mathbb R^2$ and
$\partial\Omega\cap B_{r_0}$ consist of two $C^{2}$-curves
$\sigma_1$ and $\sigma_2$ intersecting at the origin with an angle $\mu\pi$, for some
constants $\mu\in (0,1]$ and $r_0>0$. Suppose $ u \in C^{2}(\Omega)$ is  a solution of
\eqref{eq-MainEq}-\eqref{eq-MainBoundary}.
Then,
for any $z\in \Omega\cap B_\delta$,
\begin{equation}\label{eq-main_estimate_small}
\left|u(z)-f_\mu(z)\right| \leq Cd(z),\end{equation}
where $f_\mu$ is given by
\eqref{eq-definition_f0},
$d$ is the distance to
$\partial\Omega$,
and $\delta$ and $C$ are positive constants depending only on
$\mu$, $r_0$ and the $C^2$-norms of $\sigma_1$ and $\sigma_2$.
\end{theorem}

We first describe the proof. Our goal is to approximate the solution $u$ in $\Omega$ by
the corresponding solution $v$ in the tangent cone $V$ of $\Omega$ at the origin in terms of the
distance $d$ to $\partial\Omega$.
Take a conformal transform $T$, with $T(0)=0$, such that
$\widetilde \Omega=T(\Omega)$ has a $C^{1,\mu}$-boundary near the origin. Then, the tangent cone
$\widetilde V$ of $\widetilde \Omega$ at the origin is a half-plane.
We can approximate the solution $\widetilde u$
in $\widetilde\Omega$ with the corresponding solution $\widetilde v$ in the tangent cone $\widetilde V$
in terms of the distance $\widetilde d$ to $\partial\widetilde \Omega$. To transform such an approximation of
$\widetilde u$ by $\widetilde v$ to that of $u$ by $v$, we need to discuss the relation between $\widetilde d$
and $d$. We are able to establish an optimal relation when points are relatively away from the boundary.
We consider two cases by different methods: points away from the boundary by a curve
transversal to the boundary at the origin (Case 1 in the proof below)
and points bounded by the above curve and another curve
tangent to the boundary at the origin up to degree 2 (Case 2 below). For these two cases,
the conformal transform $T$ plays an important role. For the rest of the points
(Case 3 below), we construct appropriate functions and
compare $u$ and $v$ directly by the maximum
principle.

Throughout the proof, we need to estimate various geometric quantities, such as
distances to curves and angles between two straight lines.
Many of these estimates are trivial if the boundary $\sigma_i$
is a line and,  these estimates follow from
approximations if $\sigma_i$ is a $C^2$-curve.

\begin{proof}
We first consider the case  $\mu=1$. In this case, 
$\sigma_1 \bigcup\sigma_2$ is a $C^{1,1}$-curve near the origin, since $\sigma_1$ and $\sigma_2$ 
are $C^2$ up to the origin and form 
an angle $\pi$ at the origin. 
Then, the conclusion follows from Theorem
3.1 for  $\alpha=1$.

We now consider the case $\mu<1$. We denote by $d_1$ and $d_2$ the distances to
$\sigma_1$ and $ \sigma_2$, respectively. We only consider the case  $d_{1}=d\leq d_2$.
We also denote by $M$ the $C^{2}$-norm of $\sigma_1$ and $\sigma_2$.
We will prove \eqref{eq-main_estimate_small} with $d=d_1$.
In the following, $C$ and $\delta$ are positive constants depending only
on $\mu$, $r_0$ and the $C^2$-norms of $\sigma_1$ and $\sigma_2$.

By restricting to a small neighborhood
of the origin, we assume $\sigma_1$ and $\sigma_2$ are curves over their tangent lines
at the origin. We also assume $\sigma_1$ is given by the function $y=\varphi_1(x)$ satisfying
$ \varphi_1(0) =0$,  $\varphi_1'(0) =0$ and
$$ |\varphi_1'' (x)| \leq M.$$
Consider the conformal homeomorphism $T: z\mapsto z^{\frac{1}{\mu}}$.
For
\begin{equation}\label{eq-coordinates}z=(x,y)=(|z| \cos \theta, |z| \sin \theta),\end{equation} we write
\begin{equation}\label{eq-tilde-coordinates}T(z)=\widetilde{z}=(\widetilde{x}, \widetilde y)
=\left(|z|^{\frac{1}{\mu}} \cos \frac{\theta}{\mu}, |z|^{\frac{1}{\mu}}  \sin \frac{\theta}{\mu}\right).\end{equation}
Set $\widetilde \sigma_i=T(\sigma_i)$, $i=1, 2$, and
$\widetilde \sigma=\widetilde\sigma_1\cup \widetilde\sigma_2$. We first
study the regularity of $\widetilde\sigma$. By expressing $\widetilde \sigma$ by
$\widetilde y=\widetilde\varphi(\widetilde x)$, we claim
\begin{equation}\label{eq-estimates_on_new_curve}
|\widetilde{\varphi} (\widetilde{x})| \leq \widetilde{M} \widetilde{x}^{1+\mu}, \quad
|\widetilde{\varphi}'(\widetilde{x})|\leq \widetilde{M}\widetilde{x}^{\mu}, \quad
|\widetilde{\varphi}''(\widetilde{x})|\leq \widetilde{M}\widetilde{x}^{\mu-1},
\end{equation}
where $\widetilde M$ is a positive constant depending only on $M$ and $\mu$.

To prove \eqref{eq-estimates_on_new_curve},
we assume $\widetilde \sigma_1=T(\sigma_1)$ is given by
$\widetilde{y} = \widetilde{\varphi}_1 (\widetilde{x})$.
To prove
the estimate of $\widetilde\varphi_1$, we note $|y|\le Cx^2$ on $\sigma_1$ and
$|z|\le C\widetilde x^\mu$ on  $\widetilde\sigma_1$ for $|z|$ sufficiently small. Then,
on  $\widetilde\sigma_1$,
$$|\widetilde y|=|z|^{\frac1\mu-1}\left||z|\sin\frac\theta \mu\right|
\le C|z|^{\frac1\mu-1}|y|\le C|z|^{\frac1\mu-1}x^2
\le C|z|^{1+\frac1\mu}\le C\widetilde x^{1+\mu}.$$
This is the first estimate in \eqref{eq-estimates_on_new_curve}.
Next, we prove estimates of derivatives of $\widetilde\varphi_1$.
By \eqref{eq-coordinates} and \eqref{eq-tilde-coordinates},
we note that $(\widetilde x, \widetilde y)$ on $\widetilde \sigma_1$ is given by
$$
\widetilde{x}=(x^{2}+\varphi_{1}(x)^{2})^{\frac{1}{2\mu}}
\cos\frac{\arcsin \frac{\varphi_{1}(x)}{((x^{2}+\varphi_{1}(x)^{2})^{\frac{1}{2}}}}{\mu},$$
and
$$\widetilde{y}=(x^{2}+\varphi_{1}(x)^{2})^{\frac{1}{2\mu}}
\sin\frac{\arcsin \frac{\varphi_{1}(x)}{((x^{2}+\varphi_{1}(x)^{2})^{\frac{1}{2}}}}{\mu}.$$
Straightforward calculations yield
\begin{align*}
\left|\frac{d\widetilde{x}}{d x}-\frac{1}{\mu}x^{\frac{1}{\mu}-1}\right|&\le Cx^{\frac{1}{\mu}},\\
\left|\frac{d^{2}\widetilde{x}}{d x^{2}}-\frac{1}{\mu}\left(\frac{1}{\mu}-1\right)
x^{\frac{1}{\mu}-2}\right|&\le Cx^{\frac{1}{\mu}-1},\end{align*}
and
\begin{align*}
\left|\frac{d\widetilde{y}}{d x}\right| &\leq \frac{1}{\mu}
\left(\frac{1}{\mu}+1\right)M x^{\frac{1}{\mu}}(1+Cx),\\
\left|\frac{d^{2}\widetilde{y}}{d x^{2}}\right|
&\leq \frac{1}{\mu^{2}}\left(\frac{1}{\mu}+1\right)M x^{\frac{1}{\mu}-1}(1+Cx).\end{align*}
With $x\le C\widetilde{x}^{\mu}$, we get the second and third estimates in
\eqref{eq-estimates_on_new_curve}. This finishes the proof of \eqref{eq-estimates_on_new_curve}
for $\widetilde x\ge 0$. A similar argument holds for $\widetilde x<0$.

We now discuss three cases for $z\in \Omega\cap B_\delta$
with $d_1(z)\le d_2(z)$, for $\delta$ sufficiently small.
We set
\begin{align}\label{eq-Definition_Omega} \begin{split}
\Omega_1&=\{z\in\Omega:\, d_1(z)> c_0|z|\},\\
\Omega_2&=\{z\in\Omega:\, c_1|z|^2<d_1(z)< c_0|z|\},\\
\Omega_3&=\{z\in\Omega:\, d_1(z)< c_1|z|^2\},\end{split}\end{align}
and \begin{align}\label{eq-Definition_gamma}\begin{split}
\gamma_1&=\{z\in\Omega:\, d_1(z)= c_0|z|\},\\
\gamma_2&=\{z\in\Omega:\, d_1(z)= c_1|z|^2\},\end{split}\end{align}
where $c_0$ and $c_1$ are appropriately chosen constants
with $c_0 <\frac{1}{2} \mu \arctan \frac{1}{4}$.
We point out that
$\gamma_1$ is transversal to $\sigma_1$, or the positive $x$-axis, at the origin, and that
$\gamma_2$ is tangent to $\sigma_1$ at the origin.

We will prove \eqref{eq-main_estimate_small} in $\Omega_1$, $\Omega_2$,
and $\Omega_3$ by considering these three cases separately.
In the first case, we prove \eqref{eq-main_estimate_small} in $\Omega_1$, for any
$c_0>0$. In the second case, we prove \eqref{eq-main_estimate_small} in $\Omega_2$, for any
$c_0>0$ sufficiently small and any $c_1>0$. We fix $c_0$ in this case.
In the third case, we prove \eqref{eq-main_estimate_small} in $\Omega_3$, for an
appropriate $c_1>0$.

Let $V$ be the tangent cone of $\Omega$ at the origin given by
\eqref{eq-Cone}
and $v$ be the solution of \eqref{eq-MainEq}-\eqref{eq-MainBoundary}
in  $V$ given by
\eqref{eq-Solution-Cone}.

{\it Case 1.} We consider $z\in \Omega_1\cap B_\delta$.

Set
$$\Omega_{+}= \Omega\cap B_{\delta}, \quad
\Omega_{-}=\Omega \cup B_{\delta}^{c}.$$
Let $u_+$  and $u_-$
be the solutions of \eqref{eq-MainEq}-\eqref{eq-MainBoundary}
in $\Omega_{+}$ and $\Omega_{-}$, respectively.  By the maximum principle, we have
\begin{equation}\label{case1-1}u_- \leq u \leq u_+ \quad \textrm{in }\Omega_+.\end{equation}
We take $\delta$ small so that $T$ is one-to-one on $\Omega_+$.
We point out that $\Omega_+$ is a bounded domain in $\Omega$
and that $\Omega_-$ is an unbounded domain containing $\Omega$.
In the following, we compare $u_+$ and $u_-$ with $v$
and establish \eqref{case1-2} and \eqref{case1-3}.

First, we compare $u_+$ with $v$. Set
$\widetilde{\Omega}_{+}=T(\Omega_+)$ and let $\widetilde u_+$ be the solution of
\eqref{eq-MainEq}-\eqref{eq-MainBoundary} in $\widetilde\Omega_+$.
We will compare $\widetilde u_+$ in $\widetilde{\Omega}_{+}$ with the corresponding
solution $\widetilde v$ in the tangent cone $\widetilde V$ of $\widetilde{\Omega}_{+}$ at the origin
and then use the relations between $u_+$ and $\widetilde u_+$ and between
$v$ and $\widetilde v$ to get the desired estimate concerning $u_+$ and $v$.
By \eqref{eq-estimates_on_new_curve}, the curve $\widetilde\sigma$ given by
$\widetilde y=\widetilde \varphi(\widetilde x)$
satisfies
$$-\widetilde M|\widetilde x|^{1+\mu}\le |\widetilde\varphi(\widetilde x)|\le \widetilde M|\widetilde x|^{1+\mu}.$$
Theorem \ref{thrm-C-1,alpha-expansion} implies, for $\widetilde z$ close to the origin,
$$|\widetilde{u}_+(\widetilde{z})+\log\widetilde{ d}|\le C\widetilde{d}^{\mu},$$
where $\widetilde d$ is the distance from $\widetilde z$ to the curve $\widetilde \sigma$.  Therefore,
for $\widetilde z$ close to the origin,
\begin{equation}\label{case1-lower}
\widetilde{u}_+(\widetilde{z})\le -\log\widetilde{ d}_{1} +C\widetilde{d}_{1}^{\mu},
\end{equation}
and
\begin{equation}\label{case1-upper}
\widetilde{u}_+(\widetilde{z})\geq -\log \widetilde{ d}_{2} -C\widetilde{d}_{2}^{\mu},
\end{equation}
where $\widetilde{d}_1$ and $\widetilde{d}_2$ are the distances from $\widetilde{z}$ to the curves
$\widetilde{y}=\widetilde{M}|\widetilde{x}|^{1+\mu}$
and
$\widetilde{y}=-\widetilde{M}|\widetilde{x}|^{1+\mu}$, respectively.
Let $(\widetilde x', \widetilde y')$ be the point on $\widetilde{y}=\widetilde{M}|\widetilde{x}|^{1+\mu} $
realizing the distance from $\widetilde z=(\widetilde x, \widetilde y)$. Then,
$$\widetilde y-\widetilde y'\le \widetilde d_1\le \widetilde y,$$
and hence
$$|\widetilde d_1-\widetilde y|\le \widetilde y'=\widetilde\varphi(\widetilde x')\le
\widetilde M|\widetilde x'|^{1+\mu}
\le C\left(|z|^{\frac1\mu}\right)^{1+\mu}=C|z|^{1+\frac1\mu}.$$
Recall $d_1\ge c_0|z|$  since $z\in \Omega_1\cap B_\delta$. By a simple
geometric argument,
we have $\theta\ge \theta_0$ for some positive constant $\theta_0$,
for $|z|$ sufficiently small. Here, $\theta$ is the angle between $Oz$ and the positive
$x$-axis, as in \eqref{eq-coordinates}. As a consequence, we get
$|z|\le Cy\le C\widetilde y^\mu$. Hence,
$|\widetilde{d}_1-\widetilde{y}|\le C\widetilde y|z|$, or
$$\left|\frac{\widetilde{d}_1}{\widetilde{y}}-1\right|\le C|z|.$$
Therefore,
$$|\log \widetilde d_1-\log  \widetilde y|\le C|z|\le Cd_1.$$
Next, we note
$$\widetilde d_1^\mu\le |\widetilde z|^\mu=|z|\le Cd_1.$$
Similar estimates hold for $\widetilde d_2$.
Then,
\eqref{case1-lower} and \eqref{case1-upper}
imply
\begin{equation}\label{eq-estimate_u_+}
|\widetilde u_+(\widetilde z)+ \log  \widetilde y|\le Cd_1.\end{equation}
Recall that $V$ is the tangent cone of $\Omega$ at the origin given by
\eqref{eq-Cone}
and $v$ is the solution of \eqref{eq-MainEq}-\eqref{eq-MainBoundary}
in  $V$ given by
\eqref{eq-Solution-Cone}. Then, $T(V)$ is the upper half-plane and the solution
$\widetilde v$ of \eqref{eq-MainEq}-\eqref{eq-MainBoundary} in $T(V)$
is given by
\begin{equation*}
\widetilde{v}(\widetilde{z})= -\log \widetilde{ y}.
\end{equation*}
Hence, \eqref{eq-estimate_u_+} implies
$$|\widetilde u_+(\widetilde z)- \widetilde v(\widetilde z)|\le Cd_1.$$
By Lemma \ref{lemma-solution_under_conformal_trans}, we have
\begin{equation*}
u_+(z)=\widetilde{u}_+(\widetilde{z})+ \log \left(\frac{1}{\mu}|z|^{\frac{1}{\mu}-1}\right),
\end{equation*}
and
\begin{equation*}
v(z)=\widetilde{v}(\widetilde{z})+ \log\left(\frac{1}{\mu}|z|^{\frac{1}{\mu}-1}\right).
\end{equation*}
Therefore, 
we obtain
\begin{equation}\label{case1-2}|u_+(z)-v(z)|\le Cd_1.\end{equation}

Next, we compare $u_-$ with $v$. We could use the similar method as
comparing $u^+$ and $v$ as above to achieve this. Instead, we
transform the unbounded domain $\Omega_-$ to a bounded domain conformally
and reduce the present situation to what we just discussed. Then, we can employ
\eqref{case1-2} directly.
To this end, we fix a point $P\in \Omega_{-}^{c}$ and
consider the conformal homeomorphism $\widehat T: z\mapsto \frac{1}{z-P}$.
We assume that $\widehat T$ maps
$\Omega_{-}$ to $\widehat{\Omega}_{-}$, $V$ to $\widehat{V}$,
$\sigma_i$ to $\widehat{\sigma}_i$, and $ l_i$ to $\widehat{l}_i$.
Then, $\widehat{\sigma}_i$ and $ \widehat{l}_i$ are $C^{2}$-curves
with bounded $C^{2}$-norms in a small neighborhood of $\widehat T(0)$ since
$\widehat T$ is smooth  in
$\overline{B}_{|0P|/2}$. The tangent cone of
$\widehat{\Omega}_{-}$ at $\widehat T(0)$, denoted by $\underline{V}$,  has an opening angle
$\mu\pi$ since $\widehat T$ is conformal. Let $\widehat{u}_{-}$, $\widehat v$ and $\underline{v}$
be the solutions of \eqref{eq-MainEq}-\eqref{eq-MainBoundary}
in $\widehat{\Omega}_{-}$, $\widehat V$ and  $\underline{V}$, respectively.
By Lemma \ref{lemma-solution_under_conformal_trans}, we have
\begin{equation*}
u_{-}(z)=\widehat{u}_{-}(\widehat{z})-  2\ln|z- P|,
\end{equation*}
and
\begin{equation*}
v(z)=\widehat{v}(\widehat{z})-  2\ln|z- P|.
\end{equation*}
By applying \eqref{case1-2}, with $u_+$ in $\Omega_{+} $ replaced by
$\widehat{u}_{-}$ and $\widehat v$ in $\widehat{\Omega}_{-}$ and $\widehat V$,
respectively, and $v$ in $V$ replaced by $\underline{v}$ in $\underline{V}$, we have
$$|\widehat{u}_{-}(\widehat{z}) - \underline{v}(\widehat{z})|\le C\widehat{d},$$
and
$$|\widehat{v}(\widehat{z}) - \underline{v}(\widehat{z}) |\le C\widehat{d}.$$
We note that the distance $\widehat d$ from $\widehat z$ to $\partial\widehat\Omega_-$
is comparable to that from $\widehat z$ to $\partial\widehat V$
since $\partial\widehat\Omega_-$ and $\partial\widehat V$ are tangent at $\widehat T(0)$.
Therefore,
$$|\widehat u_{-}(\widehat z) - \widehat v(\widehat z)|\le C\widehat d,$$
and hence
\begin{equation}\label{case1-3}|u_{-}(z) - v(z) |\le Cd_1.\end{equation}

By combining \eqref{case1-1}, \eqref{case1-2} and
\eqref{case1-3}, we have
$$|u(z) - v(z) |\le Cd_1.$$
By the explicit expression of $v$ in \eqref{eq-Solution-Cone}, it is straightforward  to verify
$$\left|v(z) +\log \left(\mu |z| \sin \frac{\arcsin \frac{d_1}{|z|}}{\mu} \right) \right|\le Cd_1. $$
We hence have \eqref{eq-main_estimate_small}  for $z\in \Omega_1\cap B_\delta$.

{\it Case 2.} We consider $z\in \Omega_2\cap B_\delta$ and discuss in  two cases.

{\it Case 2.1.} First, we assume $T$ is one-to-one in $\Omega$. Set
$\widetilde \Omega=T(\Omega)$ and let $\widetilde u$
be the solution of \eqref{eq-MainEq}-\eqref{eq-MainBoundary}
in $\widetilde \Omega$.
Let $\widetilde{p}=(\widetilde{x}', \widetilde{y}')$
be the closest point on $\widetilde{\sigma}_1$ to $\widetilde{z}=(\widetilde x, \widetilde y)$
with the distance $\widetilde{d}$.
We first demonstrate that we are able to
put a ball in $\widetilde\Omega$ and another ball outside $\widetilde \Omega$,
both of which are tangent to $\partial\widetilde\Omega$ at $\widetilde p$, and
then compare $\widetilde u$ with the corresponding solutions associated with these tangent balls.
Based on this, we can compare $\widetilde u$ with the solution in {\it some} half-space, which is
close to the tangent cone $\widetilde V$ of $\widetilde \Omega$ at the origin. Then, we can compare
$u$ with the solution $v'$ in some cone,
which is close to the tangent cone $V$ of $\Omega$ at the origin,
as shown in \eqref{eq-Case2.1.1}. Last, we compare $v'$ with $v$  in \eqref{eq-Case2.1.2}.
We now proceed with the proof.

For $z\in \Omega_2$, $d_1<c_0|z|$.
If $c_0$ is small, then $|\widetilde y|\le c_*|\widetilde x|$, for some constant
$c_*$ small. This follows easily from the relation between $z$ and $\widetilde z$, as given by
\eqref{eq-coordinates} and \eqref{eq-tilde-coordinates}.
By Lemma \ref{curve-distance-x-x'-1}, we have
\begin{equation*}\label{eq-diffrence}|\widetilde{x}'- \widetilde{x}| \le C \widetilde{x}'^{\mu}\widetilde{d}.
\end{equation*}
Note that $|z|$ is comparable with $x$ and that $|\widetilde z|$ is comparable with $\widetilde x$.
With $\widetilde{x}'^{\mu} \leq |z|$ and by  \eqref{eq-estimates_on_new_curve}, we have
\begin{equation}\label{eq-slope1}
|\widetilde\varphi'(\widetilde x')| \leq \widetilde M\widetilde x'^{\mu}\le C|z|,\end{equation}
and similarly
\begin{equation}\label{eq-slope2}\frac{|\widetilde\varphi(\widetilde x')|}{|\widetilde x'|}\le C|z|.\end{equation}
Next, we claim, for any $\widehat x$ sufficiently small,
\begin{equation}\label{taylor-expansion_of_curve}
|\widetilde{\varphi}(\widehat{x})-\widetilde{\varphi}(\widetilde{x}')
-\widetilde\varphi'(\widetilde x')(\widehat{x}-\widetilde{x}')|
\leq K |z|^{1-\frac{1}{\mu}}(\widehat{x} -\widetilde{x}' )^2,
\end{equation}
where $K$ is a positive constant depending only on
$M$ and $ \mu$.  We prove \eqref{taylor-expansion_of_curve} in three cases.
If $\widehat{x} \geq |z|^{\frac{1}{\mu}}/3$,
then, with $\mu\in (0,1)$,
$$ |\widetilde{\varphi} ''(\widehat{x})|\leq \widetilde{M}\widehat{x}^{\mu-1}
\le C|z|^{1-\frac{1}{\mu}},$$
and \eqref{taylor-expansion_of_curve} holds by the Taylor expansion.
If $ 0 \leq \widehat{x} \leq |z|^{\frac{1}{\mu}}/3$,  we have
\begin{align*}
&|\widetilde{\varphi}(\widehat{x})|\leq \widetilde{M}\widehat {x}^{1+\mu} \leq  C|z|^{1+\frac{1}{\mu}}, \\
&|\widetilde\varphi'(\widetilde x')(\widehat{x}-\widetilde{x}')|  \leq C|z|^{1+\frac{1}{\mu}}, \end{align*}
and
$$|z|^{1+\frac{1}{\mu}}=|z|^{1-\frac{1}{\mu}}(|z|^{\frac{1}{\mu}})^2
\leq C|z|^{1-\frac{1}{\mu}}(\widehat{x}  - \widetilde{x}' )^2 . $$
Then, \eqref{taylor-expansion_of_curve} follows.
If $ \widehat{x} \leq 0$,  we have
$$|\widetilde{\varphi}(\widehat{x})|\leq \widetilde{M}|\widehat{x}|^{1+\mu}
\leq  Cr^{1-\frac{1}{\mu}}(\widehat{x}  - \widetilde{x}' )^2, $$
and
$$|\widetilde\varphi'(\widetilde x')(\widehat{x}-\widetilde{x}')|
\leq Cr^{1-\frac{1}{\mu}}(\widehat{x}  - \widetilde{x}' )^2. $$
Then, \eqref{taylor-expansion_of_curve} also holds.

We note that the left-hand side of \eqref{taylor-expansion_of_curve} is given by
the difference of $\widetilde \varphi$ and its linear part at $\widetilde x'$. Hence, the graph of
$\widetilde\varphi$, viewed as a graph over its tangent line at $(\widetilde x', \widetilde\varphi(\widetilde x'))$,
is bounded by two parabolas.
Hence, we have, for some $R=C'|z|^{\frac{1}{\mu}-1}$,
$$B_R( \widetilde{p}+R\vec{n})\subset\widetilde\Omega
\quad\text{and}\quad
B_R( \widetilde{p}-R\vec{n})\cap\widetilde \Omega=\emptyset,$$
where $\vec{n}$ is the  unit inward normal vector of $\widetilde{\sigma}_1$ at $\widetilde{p}$
and $C'$ is some positive constant.
Let $u_{R,\widetilde{p}+R\vec{n}}$ and $v_{R, \widetilde{p}-R\vec{n}}$ be the solutions of
\eqref{eq-MainEq}-\eqref{eq-MainBoundary} in
$B_R(\widetilde{p}+R\vec{n})$ and $\mathbb R^2\setminus B_R(\widetilde{p}-R\vec{n})$,
given by \eqref{eq-solution-inside} and \eqref{eq-solution-outside},
respectively. Then, by the maximum principle, we have
$$v_{R,\widetilde{p}-R\vec{n}}\le \widetilde u\le u_{R, \widetilde{p}+R\vec{n}}
\quad\text{in }B_R(\widetilde{p}+R\vec{n}),$$
and hence, at  $\widetilde{z}$,
$$-\log \widetilde{d}-\log\left(1+\frac{\widetilde{d}}{2R}\right)\le \widetilde u
\le -\log \widetilde d-\log\left(1-\frac{\widetilde{d}}{2R}\right).$$
Therefore,
\begin{equation}\label{error_term_tilde_d}
|\widetilde{u} (\widetilde{z}) +\log \widetilde{ d} |\le \frac{C\widetilde{d}}{|z|^{\frac{1}{\mu}-1}}.
\end{equation}
For \textit{T}: $z\mapsto z^{\frac{1}{\mu}}$,
if $z_1, z_2 \in B_{|z|/3}(z)$, we have
\begin{equation*}
|\textit{T}(z_1)-\textit{T}(z_2)|\leq \frac{1}{\mu}\max_{z' \in B_{|z|/3}(z)}
\{|z'|^{\frac{1}{\mu}-1}\}|z_1 -z_2|.
\end{equation*}
Let $q $ be the closest point  on $\sigma_1$ to $z$, with the distance given by $d_1$.
By $d_1 < c_0 |z|$ for $c_0$ small,
we have $q \in B_{|z|/3}(z)$ if $|z|$ is small. Therefore,
\begin{equation}\label{eq-distance}
\widetilde{d} \leq \textrm{dist}(\widetilde{z} ,T({q})) \leq C|z|^{\frac{1}{\mu}-1} d_1.
\end{equation}
With \eqref{error_term_tilde_d}, we obtain
\begin{equation*}
|\widetilde{u} (\widetilde{z}) +\log\widetilde{ d}|\le Cd_1.
\end{equation*}
Let $\widetilde{l}$ be the tangent line of  $\widetilde{\sigma}_1$
at $\widetilde{p}$ and
$\widetilde{l}'$  be the line passing the origin and $\widetilde{p}$.
Then, the slopes of these two straight lines are bounded by $C|z|$
by \eqref{eq-slope1} and \eqref{eq-slope2}.
Therefore, the included angle $\widetilde\theta$
between  $\widetilde{l} $ and $\widetilde{l} '$  is less than $C|z|$, and hence,
\begin{equation*}
|\textrm{dist} (\widetilde{z} ,\widetilde{l}') -\widetilde d|
= |\widetilde{d}\cos\widetilde{\theta} -\widetilde d|
\le C \widetilde{d} \widetilde{\theta}^{2} \le C \widetilde{d} |z|^{2}.
\end{equation*}
By $c_1 |z|^2 \leq d_1 $, we obtain
\begin{equation*}
|\textrm{dist} (\widetilde{z} ,\widetilde{l}') -\widetilde d|\le C\widetilde{d} d_1.
\end{equation*}
Let $\widetilde{V}'$ be the half-plane above the line $\widetilde{l}'$
and $\widetilde{v}'(z)$
be the solution of \eqref{eq-MainEq}-\eqref{eq-MainBoundary}
in $\widetilde{V}'$. Then,  $\widetilde{v}'(\widetilde{z})=-\log\textrm{dist} (\widetilde{z} ,\widetilde{l}')$
and hence
$$|\widetilde{v}'(\widetilde{z}) + \log \widetilde{d}|\le C d_1. $$
Therefore,
\begin{equation}\label{expansion_case2}
|\widetilde{u}(\widetilde{z}) - \widetilde{v}'(\widetilde{z})|\le C d_1.
\end{equation}
Set $V'=\textit{T}^{-1}(\widetilde{V}')$ 
and let $v'$ be the solution of \eqref{eq-MainEq}-\eqref{eq-MainBoundary}
in $V'$. By Lemma \ref{lemma-solution_under_conformal_trans}, we get
\begin{equation*}
u(z)=\widetilde{u}(\widetilde{z})+ \log \left(\frac{1}{\mu}|z|^{\frac{1}{\mu}-1}\right),
\end{equation*}
and
\begin{equation*}
v'(z)=\widetilde{v}'(\widetilde{z})+ \log\left(\frac{1}{\mu}|z|^{\frac{1}{\mu}-1}\right).
\end{equation*}
Combining with \eqref{expansion_case2}, we have
\begin{equation}\label{eq-Case2.1.1} |u(z) -v'(z)|\le C d_1.\end{equation}
We point out that the choice of $v'$ depends on $z$.

Next, we compare $v'$ with the solution $v$ in the tangent cone $V$ of $\Omega$ at 0.
To this end, we need to compare $\mathrm{dist}(z,\partial V')$ with $d_1$, which is the distance from $z$ to $\sigma_1$.
Recall that $q$ is the closest point on $\sigma_1$ to $z$ and that
$\widetilde p$ is the closest point on $\widetilde\sigma_1$ to $\widetilde z$. Denote $p=T^{-1}\widetilde{p}$.
Set $p=(x', \varphi_1(x'))$ and $q=(\overline{x}, \varphi_1(\overline x))$. We first claim
\begin{equation}\label{eq-distances}|x'-\overline{x}|\le \frac{Cd_1^2}{|z|}+C|z|d_1.\end{equation}
To prove \eqref{eq-distances}, we will compare $x'$, $\overline{x}$ with $x$.
Since $q=(\overline{x}, \varphi_1(\overline x))$ is the closest point on $\sigma_1$
to $z=(x,y)$ with the distance $d_1$,
by Lemma \ref{curve-distance-x-x'-1} with $\alpha=1$,
we have
\begin{equation}\label{eq-distances1}|x-\overline{x}|\le Cxd_1\le C|z|d_1.\end{equation}
Since $\widetilde{p}=(\widetilde{x}', \widetilde{y}')$
is the closest point on $\widetilde{\sigma}_1$ to $\widetilde{z}=(\widetilde x, \widetilde y)$
with the distance $\widetilde{d}$,
by Lemma \ref{curve-distance-x-x'-1} again, we have
$$ |\widetilde x'- \widetilde{x}| \leq C\widetilde{x}^{\mu }\widetilde{d},$$
and hence
$$\operatorname{dist}(\widetilde{p}, (\widetilde{x}, \widetilde{\varphi}_{1}(\widetilde{x}) )\le
C\widetilde{x}^{\mu }\widetilde{d}.$$
Set
$(x_*, \varphi_{1}(x_*) )=T^{-1}(\widetilde{x}, \widetilde{\varphi}_{1}(\widetilde{x}))$.
Then,
\begin{align*}
\textrm{dist}(p, (x_*, \varphi_{1}(x_*) ))
&\leq C (|z|^{\frac{1}{\mu}})^{\mu-1}\textrm{dist}(\widetilde{p},
(\widetilde{x}, \widetilde{\varphi_{1}}(\widetilde{x}) ) )
\leq  C (|z|^{\frac{1}{\mu}})^{\mu-1}\widetilde{x}^{\mu }\widetilde{d}\\
&\leq  C (|z|^{\frac{1}{\mu}})^{\mu-1}(|z|^{\frac{1}{\mu}})^{\mu }|z|^{\frac{1}{\mu}-1} d_1
\leq C |z|d_1,
\end{align*}
where we used \eqref{eq-distance} in estimating $\widetilde d$. Hence, with $p=(x', \varphi_1(x'))$,
\begin{equation}\label{eq-distances2}|x'-x_*|\le C|z|d_1.\end{equation}
Next, we compare $x_*$ and $x$. We write
$z=(x,y)=(|z|\cos \theta,|z|\sin\theta)$. Hence,
$$x=|z|\cos\theta,\quad\widetilde{x}=|z|^{\frac{1}{\mu}}\cos \frac{\theta}{\mu}.$$
Note $T^{-1}(\widetilde{x}, \widetilde{\varphi}_{1}(\widetilde{x}))$ is a point on $\sigma_1$ and
denote this point by $(x_*,y_*)=(r_*\cos\theta_*$, $r_*\sin\theta_*)$. The definition of $T$ implies
$$(\widetilde{x}, \widetilde{\varphi}_{1}(\widetilde{x}))
=\left(r_*^{\frac1\mu}\cos\frac{\theta_*}\mu, r_*^{\frac1\mu}\sin\frac{\theta_*}\mu\right).$$
Hence,
$$\left|\tan\frac{\theta_*}{\mu}\right|=\left|\frac{\widetilde{\varphi}_1(\widetilde x)}{\widetilde x}\right|
\le C|\widetilde x|^\mu\le C|z|,$$
and then
$$|\theta_*|\le C|z|.$$
Moreover,
$$r_*^{\frac1\mu}=(\widetilde x^2+(\widetilde\varphi_1(\widetilde x))^2)^{1/2}
=\widetilde x\left(1+\left(\frac{\widetilde{\varphi}_1(\widetilde x)}{\widetilde x}\right)^2\right)^{1/2}.$$
Then,
$$x_*=r_*\cos\theta_*=|z|\left(\cos\frac\theta\mu\right)^\mu
\left(1+\left(\frac{\widetilde{\varphi}_1(\widetilde x)}{\widetilde x}\right)^2\right)^{\mu/2}\cos\theta_*.$$
A straightforward calculation yields
$$\left|x_*-|z|\left(\cos \frac{\theta}{\mu}\right)^{\mu}\right| \le C|z|^3.$$
Note $c_0|z|>d_1> c_1|z|^2$. Then, $|\theta| \leq C\frac{d_1}{|z|}$ and hence
\begin{equation}\label{eq-distances3}|x_*-x|\le
\left||z|\left(\cos \frac{\theta}{\mu}\right)^{\mu}-|z|\cos \theta\right|+C|z|^3
\le \frac{Cd_1^2}{|z|}+C|z|d_1.\end{equation}
Therefore, \eqref{eq-distances} follows from \eqref{eq-distances1}, \eqref{eq-distances2},
and \eqref{eq-distances3}.
Denote by $l'$ and $\overline{l}$ the straight lines passing the origin and intersecting $\sigma_1$ at
$p$ and $q$, respectively. Then, the difference of their slopes can be estimated by
$$\left|\frac{\varphi_1(x')}{x'}-\frac{\varphi_1(\overline{x})}{\overline{x}}\right|\le \frac{Cd_1^2}{|z|}+C|z|d_1,$$
and a similar estimate holds for the angle between $l'$ and $\overline{l}$.
These estimates follow from a simple geometric argument and the $C^2$-bound of $\varphi_1$.
Then, with $c_1|z|^2\le d_1$,
$$|\operatorname{dist}(z,l')-\operatorname{dist}(z,\overline{l})|\le |z|\cdot \left(\frac{Cd_1^2}{|z|}+C|z|d_1\right)
\le Cd_1^2.$$
Denote by $\widehat \theta$ the angle between the line $\overline{l}$ and the tangent line of
$\sigma_1$ at $q$. Then,
$$|\operatorname{dist}(z,\overline{l})-d_1|=|d_1\cos\widehat \theta-d_1|\le Cd_1\widehat\theta^2\le Cd_1^2,$$
and hence
$$|\operatorname{dist}(z,l')-d_1|\le C d_1^2.$$
Recall that $v'$ is the solution of \eqref{eq-MainEq}-\eqref{eq-MainBoundary}
in the cone $V'$, which has the same opening angle as the tangent cone $V$.
By the explicit expressions of $v'$ in \eqref{eq-Solution-Cone}, it is straightforward to verify
\begin{equation}\label{eq-Case2.1.2}
\left|v'(z) +\log \left(\mu r \sin \frac{\arcsin \frac{d_1}{|z|}}{\mu} \right)\right|\le Cd_1.\end{equation}
By combining \eqref{eq-Case2.1.1} and \eqref{eq-Case2.1.2},
we hence have \eqref{eq-main_estimate_small} for $z\in \Omega_2\cap B_\delta$.

{\it Case 2.2.} Now we consider the general case that the map $T: z\mapsto z^{\frac1\mu}$
is not necessarily one-to-one
in $\Omega$. Take $R>0$ sufficiently small such that $T$ is
one-to-one in $D=\Omega\cap B_R$. Let $u_D$ be the solution
of \eqref{eq-MainEq}-\eqref{eq-MainBoundary} in $D$.
Then, the desired estimate for $u_D$ holds in
$\Omega_1$ and $\Omega_2$ by Case 1 and Case 2.
In the following, we denote  by $u_\Omega$ the given solution $u$ in $\Omega$.
Then, \eqref{eq-main_estimate_small} holds for $u_\Omega$ in $\Omega_1$.
We now prove \eqref{eq-main_estimate_small} holds for $u_\Omega$ in $\Omega_2$.
Since $D$ and
$\Omega$ coincide in a neighborhood of the origin, we have,
by  \eqref{sol-loc},
$$
\left|u_\Omega(z)+\log \left(\mu |z| \sin \frac{\arcsin \frac{d_1}{|z|}}{\mu} \right)\right|
\le  Cd_1+C|z|^{\frac1\mu}.$$
We need to estimate $|z|^{\frac1\mu}$.

If $\frac{1}{\mu} \geq 2$, we have $|z|^{\frac1\mu}\le |z|^2\le Cd_1$, for $z\in\Omega_2\cap B_\delta$, and
then \eqref{eq-main_estimate_small} for
$u_\Omega$ in $\Omega_2\cap B_\delta$.
For $\frac{1}{\mu} < 2$, we adopt notations in the proof of Lemma \ref{lemma-Localization}.
We take $\widetilde \mu>\mu$ sufficiently close to $\mu$ and set
$$\widetilde \theta=\theta+\frac12(\widetilde \mu-\mu)\pi.$$
By \eqref{eq-subsolution-3}, we have
\begin{equation*}
e^{2u_D}\ge\left(\frac{1}{\widetilde{\mu } |z|\sin\frac{\widetilde{\theta}}{\widetilde{\mu}}}\right)^2
\left(1+A|z|^{\frac{1}{\widetilde{\mu}}} \right)^{-2} \quad\text{in } \Omega\cap B_\delta, \end{equation*}
for $\delta$ sufficient small. Consider
$$\widehat\Omega=\Omega_2\cup\gamma_2\cup\Omega_3=\{z\in \Omega: d_1(z)< c_0|z|\}.$$
For $c_0$ small,  we have
\begin{equation*}
e^{2u_D}\ge\frac{2}{|z|^{2}}\quad\text{in }
\widehat\Omega\cap B_{\delta}, \end{equation*}
if $\delta$ is smaller.
Then, it is straightforward to verify that $u_D +\log  (1+A|z|^{2})$ is a  supersolution of
\eqref{eq-MainEq} in $\widehat\Omega\cap B_\delta.$
By Case 1, we have
\begin{equation}\label{u_2-U_1-case1}
u_\Omega \leq u_D +Cd_1\quad\text{on } \gamma_1\cap B_\delta.
\end{equation}
We set,
for two constants $a$ and $ b,$
$$\phi=a d_1 - bd_{1}^{2}.$$
Then,
$$|\Delta\phi  +a\kappa +2b |\le Cd_1, $$
where $\kappa$ is the curvature of $\sigma_1$, evaluated at the
closest point on $\sigma_1$ to $z$.
We can take positive constants $a$ and $b$ depending only on $M$ and $\mu$ such that
$$\phi>0,  \quad \Delta\phi<0
\quad\text{in }\widehat\Omega\cap B_{\delta},$$
and
$$u_\Omega \leq u_D + \phi
\quad\text{on }\gamma_1\cap B_\delta.$$
By Lemma \ref{lemma-ExteriorCone}, we have
$$|u_\Omega-u_D|\le C\quad\text{in }\Omega\cap B_\delta,$$
for $\delta$ sufficiently small. By taking $A$ large  and the maximum principle, we have
$$u_\Omega \leq u_D +\log  (1+A|z|^{2}) + \phi
\quad\text{in }\widehat\Omega\cap B_{\delta}.$$
Similarly, we obtain
$$u_D \leq u_\Omega +\log  (1+A|z|^{2}) + \phi
\quad\text{in }\widehat\Omega\cap B_{\delta},$$
and hence
$$u_\Omega = u_D +\log  (1+A|z|^{2}) + \phi
\quad\text{in }\widehat\Omega\cap B_{\delta}.$$
Note $c_1|z|^2\le d_1$ in $\Omega_2$, we get
\begin{equation}\label{u_2-U_1-case2}
|u_\Omega -u_D |\le Cd_1 \quad\text{in }\Omega_2\cap B_\delta,
\end{equation}
and hence \eqref{eq-main_estimate_small} for
$u_\Omega$ in $\Omega_2\cap B_\delta$.

We note that $\mu<1$ is not used here. We actually proved the following statement:
If
$$|u_\Omega - u_D |\le C d_1\quad\text{in }\gamma_1\cap B_\delta,$$
then
\eqref{u_2-U_1-case2}  holds in
$\Omega_2\cap B_{\delta}$.

{\it Case 3.}  We consider $z\in \Omega_3\cap B_\delta$.  We point out that we will not
need the transform $T$ in this case.

Let $q $ be the closest point on $\sigma_1$ to $z$ and set
$B_*=B_{\frac{1}{20c_1}}(q+\frac{1}{20c_1}\vec{n})$,
where $\vec{n}$ is the unit inward normal vector of $\sigma_1$ at $q$.
Note that $B_*$ is a ball tangent to $\sigma_1$ at $q$ and that
$\partial B_*$ intersects
$\gamma_2$ at two points.
Denote by $Q$ one of these intersections with the larger distance to the origin.
Then for $c_1= c_1 (M, \mu )$ large, we have $\operatorname{dist}(O, Q)<3|z|$.
With $d_1\le c_1|z|^2$, we have
\begin{align}\label{eq-expansion_cubic}
\left|\mu |z| \sin \frac{\arcsin \frac{d_{1}}{|z|}}{\mu}-d_1\right|\le C|z|
\left(\frac{d_1}{|z|}\right)^{3}\le C d_1^2
\quad\text{in }\Omega_3\cap B_\delta.
\end{align}
By what we proved in
Case 2, we have
$$|u +\log d_1|\le Cd_1\quad\text{on }\gamma_2\cap B_\delta.$$
For some positive constants $a$ and $b$, set
$$\phi=ad_1-bd_1^2.$$
Then,
$$|\Delta\phi+a\kappa+2b|\le Cd_1.$$
Let $u_*$ be the solution of \eqref{eq-MainEq}-\eqref{eq-MainBoundary} in
$B_*$.
By taking $a$ and $b$ depending only on $M$ and $\mu$, we have
$$\phi>0,  \quad \Delta\phi<0
\quad\text{in }\Omega_3\cap B_{\delta},$$
and
$$u \leq u_*+ \phi
\quad\text{on }\gamma_2\cap B_*.$$
We note that $\Omega_3\cap B_*$ consists of two parts,
$\partial B_*\cap \Omega_3$ and $\gamma_2\cap B_*$, and that
$u_*=\infty$ on $\partial B_*$.
By the maximum principle, we obtain
$$u \leq u_*+ \phi
\quad\text{in }\Omega_3\cap B_*.$$
With  $|u_{*} +\log d_1|\le Cd_1$, we have, at the fixed $z$,
$$u \leq -\log d_1 + C d_1.$$
Since we can always put a ball outside $\Omega$ and tangent to $\partial\Omega$ at
$q$ due to $\mu<1$, we get
$$u \geq -\log d_1 - C d_1.$$ Therefore, we obtain
$$|u(z)+\log d_1|\le Cd_1,$$
and hence \eqref{eq-main_estimate_small}  for $z\in \Omega_3\cap B_\delta$
by \eqref{eq-expansion_cubic}.

By combining Cases 1-3, we finish the proof of
\eqref{eq-main_estimate_small}.
\end{proof}

Now, we discuss the case when  the opening angle of the tangent cone of $\Omega$
at the origin is larger than $\pi$. We first introduce the leading term.
Let $\partial\Omega$ in a neighborhood of the origin consist of two $C^{2}$-curves
$\sigma_1$ and $\sigma_2$ intersecting at the origin at an angle $\mu\pi$, for some
constant $\mu\in (1,2)$. Define,
for any $z\in \Omega$,
\begin{equation}\label{eq-definition_f}
f_{\mu}(z)=
\begin{cases}
-\log (\mu |z|  \sin\frac{\arcsin\frac{d_{1}(z)}{|z|}}{\mu})
& \text{if } d_1(z) <d_2(z),\\
-\log(\mu |z|  \sin\frac{\theta}{\mu})
& \text{if }d_1(z) = d_2(z),\\
-\log(\mu |z| \sin\frac{\arcsin\frac{d_{2}(z)}{|z|}}{\mu})
& \text{if }d_1(z)>d_2(z),\\
\end{cases}
\end{equation}
where $d, d_1$ and $ d_2$ are the distances to $\partial\Omega, \sigma_1$ and $ \sigma_2$, respectively,
$\theta$ is the angle anticlockwise from the tangent line of $\sigma_1$ at the origin to $\overrightarrow{Oz}$.
We note that $\{z\in\Omega:\, d_1(z)=d_2(z)\}$ has a nonempty interior for $\mu\in (1,2)$ and
that $f_\mu$ is well-defined for $z$ sufficiently small.
It is straightforward to verify that $\partial\{z\in\Omega: d_1(z)< (\text{or}>) d_2(z)\}\cap \Omega$ near the origin
is a line segment perpendicular to the tangent line of $\sigma_1$ (or $\sigma_2$) at the origin. 
In fact, let $\sigma_1$ be given by a function $y=\varphi(x)\in C^{2}([0, \delta])$, for some
constant $\delta> 0$, satisfying $\varphi(0)=0$ and  \begin{equation*}
|\varphi' (x)| \leq Mx.\end{equation*} 
We now claim that $\partial\{z\in\Omega: d_1(z)< d_2(z)\}\cap \Omega$
is given by the positive vertical axis near the origin. 
To this end, we fix a point $p=(x_0,y_0)\in\Omega$ close to the origin.  If $x_0>0,$
we have 
$$\text{dist}(p, \sigma_1)\leq|p-(x_0,\varphi (x_0))|\leq  |y_0|+\frac{M}{2}x_0^2 
<\sqrt{x_0^2+y_0^2}=|p|,$$ 
if $|p|$ is small. If $x_0\leq0$, then, for any $x>0$ sufficiently small, 
$$|p|=\sqrt{x_0^2+y_0^2}<\sqrt{(x_0-x)^2+(y_0-\varphi(x))^2}=|p-(x,\varphi(x))|.$$
The finishes the proof of the claim. 

\begin{theorem}\label{thrm-LargeAngles}
Let $\Omega$ be a bounded domain in $\mathbb R^2$ and
$\partial\Omega\cap B_{r_0}$  consist of two $C^{2}$-curves
$\sigma_1$ and $\sigma_2$ intersecting at the origin at an angle $\mu\pi$, for some
constants $\mu\in (1,2)$ and $r_0>0$. Suppose $ u \in C^{2}(\Omega)$ is  a solution of
\eqref{eq-MainEq}-\eqref{eq-MainBoundary}.
Then, for any $z\in\Omega\cap B_{\delta}$,
$$|u(z)-f_\mu(z)|\le Cd(z),$$
where
$f_\mu$ is the function defined by \eqref{eq-definition_f},
and $ \delta$ and $C$ are positive constants depending only on
$\mu$, $r_0$ and the $C^2$-norms of $\sigma_1$ and $\sigma_2$.
\end{theorem}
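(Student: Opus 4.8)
The plan is to run the scheme of the proof of Theorem~\ref{thrm-SmallAngles}, with the two boundary-regularity theorems interchanged. After a rotation we may assume the tangent cone of $\Omega$ at the origin is $V_\mu$ from \eqref{eq-Cone}, and by the symmetry $\sigma_1\leftrightarrow\sigma_2$ we may restrict to points $z$ with $d_1(z)\le d_2(z)$. The decisive point is that the conformal map $T:z\mapsto z^{1/\mu}$ now has exponent $1/\mu\in(\tfrac12,1)$: it straightens $V_\mu$ to the upper half-plane and \emph{improves} the regularity of $\partial\Omega$ at the origin, so we invoke Theorem~\ref{thrm-C-2,alpha-expansion} (with $\alpha=\mu-1$) where the earlier proof used Theorem~\ref{thrm-C-1,alpha-expansion}. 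Moreover, since $1/\mu<1$, a single branch of $z^{1/\mu}$ is defined and one-to-one on any set of angular width $<2\pi\mu$, in particular on $\Omega\cap B_{r_0}$ for $r_0$ small, which lies in a sector of width $\mu\pi+2\varepsilon<2\pi$ by the characterization of $V_\mu$; hence the non-injectivity issue of Case~2.2 does not arise here. As in Case~1 of the proof of Theorem~\ref{thrm-SmallAngles}, sandwich $u$ between the solution $u_+$ in $\Omega_+=\Omega\cap B_{r_0}$ and the solution $u_-$ in $\Omega_-=\Omega\cup B_{r_0}^c$, the latter made bounded by an auxiliary M\"obius transform $\widehat T$; it then suffices to establish the expansion for $u_+$, the argument for $u_-$ being identical after $\widehat T$.

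\emph{Step 1 (regularity under $T$).} Writing $\widetilde\sigma_i=T(\sigma_i)$ as graphs $\widetilde y=\widetilde\varphi_i(\widetilde x)$ and repeating verbatim the chain-rule computation that produced \eqref{eq-estimates_on_new_curve}, now with the shifted powers (one has $x=O(\widetilde x^{\,\mu})$ along $\widetilde\sigma_i$, and $\varphi_i(0)=\varphi_i'(0)=0$, $|\varphi_i''|\le M$), one obtains
\[|\widetilde\varphi_i(\widetilde x)|\le\widetilde M|\widetilde x|^{1+\mu},\qquad |\widetilde\varphi_i'(\widetilde x)|\le\widetilde M|\widetilde x|^{\mu},\qquad |\widetilde\varphi_i''(\widetilde x)|\le\widetilde M|\widetilde x|^{\mu-1}.\]
Thus $\widetilde\sigma=\widetilde\sigma_1\cup\widetilde\sigma_2$ is a $C^2$ curve near $T(0)$ whose curvature $\widetilde\kappa$ vanishes at $T(0)$ and satisfies $|\widetilde\kappa(\widetilde p)|\le C|\widetilde p|^{\mu-1}$; this is precisely the input used in the proof of Theorem~\ref{thrm-C-2,alpha-expansion} with $\alpha=\mu-1$ (there $\kappa(0)$ and the bound $|\Delta d+\kappa(0)|\le K(|x'|^2+d^2)^{\alpha/2}$ are all that is needed).

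\emph{Steps 2 and 3 (transplant and match $f_\mu$).} Theorem~\ref{thrm-C-2,alpha-expansion} applied to the solution $\widetilde u_+$ in $\widetilde\Omega_+=T(\Omega_+)$ near $T(0)$ gives $\widetilde u_+(\widetilde z)=-\log\widetilde d+\tfrac12\widetilde\kappa\,\widetilde d+O(\widetilde d^{\,\mu})$, with $\widetilde d$ the distance to $\widetilde\sigma$; by Step~1 the curvature term is $O(\widetilde d\,|\widetilde z|^{\mu-1})$. Pulling back by Lemma~\ref{lemma-solution_under_conformal_trans}, $u_+(z)=\widetilde u_+(T(z))+\log\!\big(\tfrac1\mu|z|^{1/\mu-1}\big)$, and the cone solution $v_\mu$ of \eqref{eq-Solution-Cone} equals $\widetilde v_\mu(T(z))+\log\!\big(\tfrac1\mu|z|^{1/\mu-1}\big)$ with $\widetilde v_\mu=-\log\widetilde y$ the half-plane solution. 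The transplanted errors are all $O(d(z))$: near $\partial\Omega$ one has $\widetilde d\approx\tfrac1\mu|z|^{1/\mu-1}d$, so $\widetilde d^{\,\mu}\approx(d/|z|)^{\mu-1}d\le d$ and $\widetilde d\,|\widetilde z|^{\mu-1}\approx d$; on the region $d_1=d_2$ (which has nonempty interior for $\mu>1$) $\widetilde z$ sits at bounded relative distance $\widetilde d\approx|\widetilde z|\approx|z|^{1/\mu}$, both errors are $\approx|z|\approx d$, and $\widetilde d=\widetilde y\,(1+O(|\widetilde z|^{\mu}))=\widetilde y\,(1+O(d))$ with $\widetilde y=|z|^{1/\mu}\sin(\theta/\mu)$, so there $u_+(z)=-\log(\mu|z|\sin(\theta/\mu))+O(d)=f_\mu(z)+O(d)$ directly. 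Where $d_1(z)$ is small relative to $|z|$ one cannot replace $\widetilde\sigma$ by the $\widetilde x$-axis with error $O(d)$, so — exactly as in Cases~2.1 and~3 of the proof of Theorem~\ref{thrm-SmallAngles} — I would split further ($d_1$ of order $|z|^2$, versus $d_1\ll|z|^2$): in the first sub-case compare $\widetilde u_+$ with the solution $\widetilde v'$ in the half-plane $\widetilde V'$ bounded by the line through $T(0)$ and the transformed foot point $\widetilde p$, use Lemma~\ref{curve-distance-x-x'-1} to bound $\widetilde x'-\widetilde x$ and the slope of $\widetilde\sigma$ at $\widetilde p$ by $C|z|$, get $\operatorname{dist}(\widetilde z,\widetilde l')=\widetilde d(1+O(|z|^2))=\widetilde d(1+O(d))$, pull $\widetilde V'$ back to the cone $V'$ through the origin and $q\in\sigma_1$, and verify $\operatorname{dist}(z,l')=d(1+O(d))$ and $v'(z)=-\log\!\big(\mu|z|\sin\tfrac{\arcsin(d_1/|z|)}{\mu}\big)+O(d)=f_\mu(z)+O(d)$ from \eqref{eq-Solution-Cone}; in the innermost sub-case $d_1\ll|z|^2$ argue directly with interior and exterior tangent balls at $q$ (available since $\sigma_1$ is $C^2$ and $q$ lies at distance $\sim|z|$ from the corner) to get $u=-\log d_1+O(d_1)$, which matches $f_\mu(z)=-\log d_1+O(d_1)$ there. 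Combining with the sandwich $u_-\le u\le u_+$ yields $|u-f_\mu|\le Cd$ in $\Omega\cap B_\delta$.

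The genuinely new point, and the crux of the argument, is Step~1: verifying that the bare $C^2$-regularity of $\sigma_i$ upgrades, under $z\mapsto z^{1/\mu}$, to ``curvature vanishing at the origin and H\"older of exponent $\mu-1$ there'' — this is what makes Theorem~\ref{thrm-C-2,alpha-expansion} applicable and forces the remainder to be $O(\widetilde d^{\,\mu})$, which is exactly what pulls back to $O(d)$. The rest — the three-region bookkeeping of the last step and the routine check that the curvature term $\tfrac12\widetilde\kappa\widetilde d$ and every geometric correction transplant to $O(d)$ — is technical but runs parallel to the corresponding parts of the proof of Theorem~\ref{thrm-SmallAngles}.
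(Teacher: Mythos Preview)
Your overall strategy is the paper's: apply $T:z\mapsto z^{1/\mu}$, observe that the transformed curves satisfy $|\widetilde\varphi''(\widetilde x)|\le\widetilde M|\widetilde x|^{\mu-1}$ (so the curvature vanishes to order $\mu-1$ at $T(0)$), invoke Theorem~\ref{thrm-C-2,alpha-expansion} with $\alpha=\mu-1$, and check that both the curvature term $\tfrac12\widetilde\kappa\,\widetilde d=O(\widetilde d\,|\widetilde z|^{\mu-1})$ and the remainder $O(\widetilde d^{\,\mu})$ pull back to $O(d)$. Your remark that Case~2.2 is vacuous here (since $1/\mu<1$ makes $T$ injective on the relevant sector) is also correct.

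There is, however, a genuine gap in your innermost sub-case $d_1\ll|z|^2$. For $\mu>1$ the \emph{exterior} of $\Omega$ near the corner is a narrow wedge of opening $(2-\mu)\pi<\pi$, so an exterior ball tangent to $\sigma_1$ at $q$ and disjoint from $\Omega$ can have radius at most $r_1\sim|z|$, not a fixed radius. Comparison with $v_{r_1,\,q-r_1\vec n}$ then yields only
\[
u(z)\ \ge\ -\log d_1-\log\Bigl(1+\tfrac{d_1}{2r_1}\Bigr)\ \ge\ -\log d_1-C|z|,
\]
and on $\Omega_3$ (where $d_1\le c_1|z|^2$) this is strictly weaker than the required $-\log d_1-Cd_1$. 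The paper closes this gap by a barrier step that mirrors the \emph{upper}-bound argument of the small-angle Case~3: starting from the crude estimate above, it compares $u$ with $v_{R,\,q-R\vec n}-\phi(d_1)$ (with $R$ fixed and $\phi(d_1)=ad_1-bd_1^2$ superharmonic) on $\Omega_3\cap B_{3|z|}(z)$, using the Case~2 expansion on $\gamma_2$ and the crude lower bound on $\partial B_{3|z|}(z)$ as boundary data, to upgrade to $u\ge-\log d_1-Cd_1$. Conversely, the \emph{upper} bound in $\Omega_3$ is now the easy one, via an interior tangent ball of fixed radius (available since $\mu>1$ makes the domain wide at the corner). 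In short, the interior/exterior roles are swapped relative to $\mu<1$, and the direct two-ball comparison you propose does not yield $O(d_1)$ for the lower bound.
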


\begin{proof}
We proceed similarly as in the proof of Theorem \ref{thrm-SmallAngles} and adopt the same notations.
We denote by $M$ the $C^{2}$-norms of $\sigma_1$ and $\sigma_2$, and define
$\Omega_1$, $\Omega_2$, $\Omega_3$ and $\gamma_1, \gamma_2$
by \eqref{eq-Definition_Omega} and \eqref{eq-Definition_gamma}, respectively,
where $c_0$ and $c_1$ are appropriately chosen constants
with $c_0 <\frac{1}{2} \arctan \frac{1}{4}$.
Consider \textit{T}: $z\mapsto z^{\frac{1}{\mu}}$.

We fix a point $z\in \Omega\cap B_\delta$, for some $\delta$ sufficiently small.
Without loss of generality, we assume
$d_{1}=d_{1}(z)=d(z)\leq d_2=d_{2}(z)$.

{\it Case 1.} We consider $z\in \Omega_1\cap B_\delta$.

Set
$\Omega_{+}= \Omega\cap B_{\delta}$
and let $u_+$
be the solution of \eqref{eq-MainEq}-\eqref{eq-MainBoundary}
in $\Omega_{+}$.
We take $\delta$ small so that $T$ is one-to-one on $\Omega_+$.
Set
$\widetilde{\Omega}_{+}=T(\Omega_+)$ and let $\widetilde u_+$ be the solution of
\eqref{eq-MainEq}-\eqref{eq-MainBoundary} in $\widetilde\Omega_+$.
By \eqref{eq-estimates_on_new_curve}, the curve $\widetilde\sigma$ given by
$\widetilde y=\widetilde \varphi(\widetilde x)$
satisfies
$$-\widetilde M|\widetilde x|^{1+\mu}\le |\widetilde\varphi(\widetilde x)|\le \widetilde M|\widetilde x|^{1+\mu}.$$
We note here $1+\mu>2$.
Theorem \ref{thrm-C-2,alpha-expansion} implies, for $\widetilde z$ close to the origin,
\begin{equation}\label{case1-lower1}
\widetilde{u}_+(\widetilde{z})\leq -\log\widetilde{ d}_{1}+\frac12\kappa_1\widetilde d_1 +C\widetilde{d}_{1}^{\mu},
\end{equation}
and
\begin{equation}\label{case1-upper1}
\widetilde{u}_+(\widetilde{z})\geq -\log \widetilde{ d}_{2}+\frac12\kappa_2\widetilde d_2 -C\widetilde{d}_{2}^{\mu},
\end{equation}
where $\widetilde{d}_1$ and $\widetilde{d}_2$ are the distances from $\widetilde{z}$ to the curves
$\widetilde{y}=\widetilde{M}|\widetilde{x}|^{1+\mu}$ and
$\widetilde{y}=-\widetilde{M}|\widetilde{x}|^{1+\mu}$, respectively, and
$\kappa_1$ and $\kappa_2$ are the curvatures of the curves
$\widetilde{y}=\widetilde{M}|\widetilde{x}|^{1+\mu}$ and
$\widetilde{y}=-\widetilde{M}|\widetilde{x}|^{1+\mu}$, respectively.
Recall from the proof of Theorem \ref{thrm-SmallAngles} that, for $c_0|z|<d_1$,
$$|\log \widetilde d_i-\log \widetilde y|\le Cd_1,$$
and
$$\widetilde d_i^\mu\le  Cd_1.$$
Moreover,
$$|\kappa_i|\le C|\widetilde{z}|^{\mu-1}=C|z|^{\frac{\mu-1}{\mu}}\le Cd_1^{\frac{\mu-1}{\mu}}.$$
Therefore, \eqref{case1-lower1} and \eqref{case1-upper1} imply
$$|\widetilde u_+(\widetilde z)+ \log  \widetilde y|\le Cd_1.$$
This is the same as \eqref{eq-estimate_u_+}. The rest of the proof for Case 1 is identical to that in the proof of
Theorem \ref{thrm-SmallAngles}.

{\it Case 2.} We consider $z\in \Omega_2\cap B_\delta$.

Arguing similarly as in the proof of
Theorem \ref{thrm-SmallAngles}, we have
\begin{equation}\label{taylor-expansion_of_curve2}
|\widetilde{\varphi}(\widehat{x})-\widetilde{\varphi}(\widetilde{x}')
-\widetilde\varphi'(\widetilde x')(\widehat{x}-\widetilde{x}')|
\leq K (|z|^{1-\frac{1}{\mu}}+ |\widehat{x} -\widetilde{x}'|^{\mu -1} )
(\widehat{x} -\widetilde{x}' )^2.
\end{equation}
This plays a similar role as \eqref{taylor-expansion_of_curve}.
Then,  we have
\begin{equation*}
\widetilde{u}(\widetilde{z})\leq -\log \widehat{ d}_{1}
+\frac12\kappa_{1}\widehat{d}_{1}+C\widehat{d}_{1}^{\mu},
\end{equation*}
and
\begin{equation*}
\widetilde{u}(\widetilde{z})\geq -\log \widehat{ d}_{2}
+\frac12\kappa_{2}\widehat{d}_{2} -C\widehat{d}_{2}^{\mu},
\end{equation*}
where $\widehat{d}_1$ is the distance from $\widetilde{z}$ to the curve
$$\widehat{y}=\widetilde{\varphi}(\widetilde{x}')
+\widetilde\varphi'(\widetilde x')(\widehat{x}-\widetilde{x}')
+ K (|z|^{1-\frac{1}{\mu}}+ |\widehat{x} -\widetilde{x}'|^{\mu -1} )
(\widehat{x} -\widetilde{x}' )^2,$$
and $\widehat{d}_2$ is the distance from $\widetilde{z}$ to the curve
$$\widehat{y}=\widetilde{\varphi}(\widetilde{x}')
+\widetilde\varphi'(\widetilde x')(\widehat{x}-\widetilde{x}')
- K (|z|^{1-\frac{1}{\mu}}+ |\widehat{x} -\widetilde{x}'|^{\mu -1} )
(\widehat{x} -\widetilde{x}' )^2.$$
Then, we proceed similarly as in Case 2 in the proof of
Theorem \ref{thrm-SmallAngles}.

 {\it Case 3.} We consider $z\in \Omega_3\cap B_\delta$.

We take $q\in \sigma_1$ with the least distance to $z$,
and denote by $l$ the tangent line of $\sigma_1$ at $q$.
We put $q$ at the origin of the line $l$.
A portion of $\sigma_1$ near $q$, including the part from the origin to $q$, can be
expressed as a $C^{2}$-function $\varphi$ in $(-s_0, s_0)$, with $\varphi(-s_0)$
corresponding to the origin in $\mathbb R^2$ and $\varphi(0)$ corresponding to $q$, i.e., $\varphi(0)=0$.
Then,
\begin{equation}\label{eq-boundaryC1alpha-Version2}
|\varphi(s)|\le \frac12M|s|^{2}\quad\text{for any }s\in (-s_0,s_0).\end{equation}
In the present case, $M$ is uniform, independent of $z$; however, $s_0$ depends on $z$.
We should first estimate $s_0$ in terms of $d_2$.
We note, for $d_2$ sufficiently small,
\begin{equation}\label{eq-Estimate-d_2}\frac12|z|\sin\frac{(2-\mu)\pi}{2}\le d_2\le |z|.\end{equation}
By the triangle inequality and
\eqref{eq-boundaryC1alpha-Version2}, we have
$$s_0\le \frac12Ms_0^{2}+|z|+d_1,$$
and
$$s_0\ge -\frac12Ms_0^{2}+|z|-d_1.$$
Then, $s_0/|z|\to 1$ as $|z|\to 0$.
We take $|z|$ sufficiently small such that $s_0\ge 2|z|/3$.

By taking $|z|$ sufficiently small, \eqref{eq-Estimate-d_2} implies
$$B_{r_1}(q-r_1\vec{n})\cap \Omega=\emptyset,$$
where $\vec{n}$ is the  unit inward normal vector of $\sigma_1$ at $q$ and
$$r_1=\frac12|z|\sin\frac{(2-\mu)\pi}{8}.$$
Let  $v_{r_1, q-r_1\vec{n}}$ be the solution of
\eqref{eq-MainEq}-\eqref{eq-MainBoundary} in
$\mathbb R^2\setminus B_{r_1}(q-r_1\vec{n})$,
given by  \eqref{eq-solution-outside}.
By the maximum principle, we have
$$u \geq v_{r_1,q-r_1\vec{n}}\quad\text{in }\Omega.$$
Hence,
\begin{equation}\label{expansion_d_1/2}
u\ge -\log d_1- C|z| \quad\text{in }\Omega_3\cap B_{\delta}.
\end{equation}

By taking $R = R(M,\mu)$ small, we have
$$\textrm{dist}(z', \sigma_1) \leq \frac{1}{2}\textrm{dist}(z',\partial B_R(q-R\vec{n})).$$
By what we proved in Case 2, we get
$$ |u +\log d_1 |\le Cd_1\quad\text{on }\gamma_2\cap B_\delta.$$
Combining with \eqref{expansion_d_1/2}, we have, for $|z|$ sufficient small,
$$u \geq v_{R,q-R\vec{n}}\quad\text{in }\Omega_3\cap
\partial B_{3|z|} (z).$$
Set
$$\phi=a d_1 - bd_{1}^{2}.$$
We can take two positive constants $a$ and $b$ depending only the geometry of $\Omega$ such that
$$\phi>0,  \quad \Delta\phi<0
\quad\text{in }\Omega_3\cap \partial B_{\delta},$$ and
$$ v_{R,q-R\vec{n}}\leq u + \phi\quad\text{on }\gamma_2\cap B_{\delta}.$$
By the maximum principle, we obtain
$$ v_{R,q-R\vec{n}}\leq u + \phi\quad\text{in }\Omega_3\cap B_{3|z|} (z).$$
By
$$|v_{R,q-R\vec{n}} +\log d_1 |\le Cd_1,$$
we have
$$u(z) \geq -\log d_1 - C d_1.$$
Since we can always put a ball inside $\Omega$ and tangent to $\partial\Omega$ at
$q$ due to $\mu>1$, we get
$$u(z) \leq -\log d_1 + C d_1.$$ Therefore,
$$|u(z) +\log d_1 |\le Cd_1,$$
and hence
$$\left|u(z) +\log \left(\mu r \sin \frac{\arcsin \frac{d_{1}}{r}}{\mu} \right) \right|\le Cd_1.$$
This is the desired estimate for $z\in \Omega_3\cap B_\delta$.
\end{proof}

\begin{remark}
We point out that the estimates in Theorem \ref{thrm-SmallAngles} and
Theorem \ref{thrm-LargeAngles} are local;
namely, they hold in $\Omega$ near the origin, independent of $\Omega$ away from the origin.
\end{remark}

\begin{remark}
The function $f_\mu$ in Theorem \ref{thrm-SmallAngles} and
Theorem \ref{thrm-LargeAngles} is locally Lipschitz since it involves the distance function, which
is Lipschitz, and  is piecewise $C^2$. In fact, $f_\mu$ is $C^2$ except along a curve
given by $d_1=d_2$ for $\mu\in (0,1)$ and except along two curves for $\mu\in (1,2)$.
On the other hand, we can replace $f_\mu$ by a function which is $C^2$ in $\Omega\cap B_{\delta}$
and maintain the same estimates as in Theorem \ref{thrm-SmallAngles} and
Theorem \ref{thrm-LargeAngles}.
\end{remark}

\begin{remark}
With a slightly more complicated argument, we can prove the following estimate:
if $\sigma_1$ and $\sigma_2$ are  $C^{1,\alpha}$-curves, for some $\alpha\in (0,1)$, then
for any $z\in \Omega\cap B_\delta$,
$$
\left|u(z)-f_\mu(z)\right| \leq Cd^\alpha(z),$$
where $f_\mu$ is given by \eqref{eq-definition_f0} for $\mu\in (0,1]$ and
by \eqref{eq-definition_f} for $\mu\in (1,2)$,
and $\delta$ and $C$ are positive constants depending only on the geometry of $\partial\Omega$.
This estimate can be viewed as a generalization of Theorem \ref{thrm-C-1,alpha-expansion}.
\end{remark}

\end{document}